\providecommand{\U}[1]{\protect\rule{.1in}{.1in}}
\newtheorem{theorem}{Theorem}
\newtheorem{corollary}[theorem]{Corollary}
\newtheorem{definition}[theorem]{Definition}
\newtheorem{lemma}[theorem]{Lemma}
\newtheorem{proposition}[theorem]{Proposition}
\newtheorem{remark}[theorem]{Remark}
\newenvironment{proof}[1][Proof]{\noindent\textbf{#1.} }{\ \rule{0.5em}{0.5em}}
\begin{document}

\title{2D Smagorinsky type large eddy models as limits of stochastic PDEs}
\author{Franco Flandoli\footnote{Email: franco.flandoli@sns.it. Scuola Normale Superiore of Pisa, Piazza dei Cavalieri 7, 56124 Pisa, Italy}\quad
Dejun Luo\footnote{Email: luodj@amss.ac.cn. Key Laboratory of RCSDS, Academy of Mathematics and Systems Science, Chinese Academy of Sciences, Beijing 100190, China and School of Mathematical Sciences, University of Chinese Academy of Sciences, Beijing 100049, China} \quad
Eliseo Luongo\footnote{Email: eliseo.luongo@sns.it. Scuola Normale Superiore of Pisa, Piazza dei Cavalieri 7, 56124 Pisa, Italy}}
\maketitle

\begin{abstract}
We prove that a version of Smagorinsky Large Eddy model for a 2D\ fluid in
vorticity form is the scaling limit of suitable stochastic models for large scales,
where the influence of small turbulent eddies is modeled by a transport type noise.
\end{abstract}

\textbf{Keywords:} Smagoringsky model, eddy viscosity model, turbulence, transport noise, scaling limit

\textbf{MSC (2020):} 60H15, 76D05

\section{Introduction}

Recently, a new stochastic approach has been developed in \cite{flandoli2021scaling, flandoli2022eddy, FlaLuoWaseda, FlandoliOberwolfach, Fla23, debussche2022second, carigi2023dissipation} to
explain Boussinesq hypothesis that ``turbulent fluctuations are dissipative on
large scales'' \cite{boussinesq1877essai}. The idea, better explained below in
Section \ref{section heuristic}, is that the large scales satisfy a
Navier-Stokes type equation with a stochastic transport term corresponding to
the action of small scales. In a suitable scaling limit, we get a
deterministic Navier-Stokes equation with an additional dissipative term. The
turbulent viscosity is directly related to the noise (namely small-scale)
covariance. All the quoted works are related to dimension 2, with the
exception of \cite{Fla23} that deals with a 2D-3C model with some three
dimensional feature, including a stretching term of small scales over large
ones and the possibility of an AKA (anisotropic kinetic alpha) effect in the limit equation. For other
approaches to justify Boussinesq hypothesis and turbulent viscosity based on Eulerian formulations of fluid dynamical systems see for
instance \cite{BerselliLES, jiang2020foundations, wirth1995eddy}. There are also different models based on filtering the systems at the Lagrangian level rather then Eulerian one, we refer to \cite{foias2001navier, foias2002three, cheskidov2005leray, cotter2017stochastic} for rigorous analysis and some discussions on the topic.

The previous works on the stochastic approach are, however, limited to the
case of \textit{linear} limit dissipation term, namely turbulent viscosity
independent of the solution. Smagorinsky type models are excluded from the
previous analysis and it was not clear for some time how to incorporate them
into this new theory. In this paper we solve this problem. This provides new
insight into these models and their motivations.

Since our techniques are, at present, well developed for the vorticity
equation, while they suffer certain difficulties for the velocity equation,
we present the results for vorticity type equations (however, as stated in \cite[Section 5]{Win96}, the performances of vorticity-velocity models are sometimes superior to those of velocity-pressure ones). We choose the following
form, discussed for instance in \cite{cottet2003vorticity}:%
\begin{equation}
\partial_{t}\omega_{L}+u_{L}\cdot\nabla\omega_{L}=\nu\Delta\omega
_{L}+\operatorname{div}\left(  g^{\prime}\left(  \omega_{L}\right)
\nabla\omega_{L}\right)  \label{smagorinsky introduction}%
\end{equation}
(written in this way so that $\operatorname{div}\left(  g^{\prime}\left(
\omega_{L}\right)  \nabla\omega_{L}\right)  =\Delta g\left(  \omega
_{L}\right)  $) with the additional conditions $\omega_{L}=\nabla^{\perp} \cdot
u_{L}$, $\operatorname{div}u_{L}=0$ and the initial condition $\omega
_{L}|_{t=0}=\omega_{0}^{L}$. Here, $L$ stands for the large scale components of fluid vorticity and velocity,
see the next section for more discussions; the fields are assumed to be periodic, on a torus.
The function $g\left(  r\right)  $ is subject to quite general assumptions
which include it is non-decreasing, so that $g^{\prime}$ is not negative. The particular case treated in \cite{cottet2003vorticity} (see also \cite{Man78, Win96, Deng20}) is
\begin{align} \label{eq:g-prime}
    g'(r)=(C_s\mathbf{\Delta})^2 \lvert r\rvert,
\end{align}
where $\mathbf{\Delta}$ is a subgrid characteristic length-scale and $C_s$ is a non-dimensional constant which has to be calibrated and its value may vary with the type of the flow and the Reynolds number. However, similarly to the Smagorinsky model in velocity form, it may be useful to cover more general nonlinearities, see for instance \cite[Section 3.3.2]{BerselliLES}. We
prove that this Smagorinsky type model is the limit of the large-scale
stochastic model%

\begin{equation}
d\omega_{L}+\left(  u_{L}\cdot\nabla\omega_{L}-\nu\Delta\omega_{L}\right)
dt=-f^{\prime}\left(  \omega_{L}\right)  \sum_{k}\sigma_{k}\cdot
\nabla\omega_{L}\circ dW_{t}^{k} \label{stochastic approximation introduction}%
\end{equation}
(where again $f^{\prime}\left(  \omega_{L}\right)  \sigma_{k}\cdot
\nabla\omega_{L}=\sigma_{k}\cdot\nabla f \left(  \omega_{L}\right)  $)
with $f$ such that $\frac{1}{4}f^{\prime}\left(  r\right)  ^{2}=g^{\prime}\left(
r\right)  $. The limit is taken along a suitable sequence of small-scale
noise, namely we assume (roughly speaking) that $\sigma_{k}$ are smaller and
smaller scale (an assumption of scale separation). The notations and
assumptions (like the fact that $\{W^{k}\}_k$ are independent Brownian motions
and $\circ$ is the Stratonovich multiplication operation) will be explained in
the technical sections.

The paper is organized as follows. In Section \ref{section heuristic} we
describe the heuristic ideas behind the stochastic model. In Section
\ref{Sec preliminaries} we state our results and introduce some mathematical
tools. In Section \ref{sec well posed} we show the existence of martingale
solutions of the problem (\ref{stochastic approximation introduction}) above.
Lastly, in Section \ref{sect scaling limit} we will show our main result about
the convergence of martingale solutions of our stochastic models to a measure concentrated on
the unique weak solution of the Smagorinsky model
(\ref{smagorinsky introduction}), see Theorem \ref{Scaling Limit Smagorinski}
below for the rigorous statement.

\section{The heuristic idea\label{section heuristic}}

The idea described in this section is similar to the one given in
\cite{FlandoliOberwolfach, Fla23}, but we repeat it and particularise
the models studied here, for completeness and to help the intuition behind the
model. Consider a 2D Newtonian viscous fluid in a torus, described in
vorticity form by the equations%
\begin{align*}
\partial_{t}\omega+u\cdot\nabla\omega &  =\nu\Delta\omega, \\
\omega =\nabla^{\perp}\cdot u,\quad {\rm div}\, u &=0,  \\
\omega|_{t=0}  &  =\omega_{0} ,
\end{align*}
where $\omega$ is the vorticity
field and $u$ the velocity field. Assume that the initial vorticity
$\omega_{0}$ is the sum of a large scale component $\omega_{0}^{L}$ plus a
small-scale component $\omega_{0}^{S}$. Then, at least on a short time
interval $[0,\tau ]$, it is reasonable to expect that the system%
\begin{align*}
\partial_{t}\omega_{L}+u\cdot\nabla\omega_{L}  &  =\nu\Delta\omega_{L}, \\
\partial_{t}\omega_{S}+u\cdot\nabla\omega_{S}  &  =\nu\Delta\omega_{S} ,\\
\omega_{L}|_{t=0} & =\omega_{0}^{L},\quad\omega_{S}|_{t=0} =\omega_{0}^{S}%
\end{align*}
represents quite well the evolution of the different vortex structures, as for
instance in the small vortex-blob limit to point vortices treated by
\cite{marchioro2012mathematical}. The system above is equivalent to the
original one, by addition.

The next step is considering only the equation for the large scales, isolating
the term which is not closed, namely depends on the small scales:%
\[
\partial_{t}\omega_{L}+u_{L}\cdot\nabla\omega_{L}-\nu\Delta\omega_{L}%
=-u_{S}\cdot\nabla\omega_{L}.
\]
Here $u_{L}$, with $\operatorname{div}u_{L}=0$, has the property
$\nabla^\perp \cdot u_{L}=\omega_{L}$ (namely $u_{L}$ is reconstructed from
$\omega_{L}$ by Biot-Savart law). The field $u_{S}$ should correspond to
$\omega_{S}$ by Biot-Savart law but we now introduce a stochastic closure
assumption. We replace $u_{S}\left(  t,x\right)  $ by a white-in-time noise,
with suitable space dependence%

\[
u_{S}\left(  t,x\right)  \mapsto \chi\left(  t,x\right)  \sum_{k}\sigma
_{k}\left(  x\right)  \frac{dW_{t}^{k}}{dt}, %
\]
where $\{\sigma_{k} \}_k$ are suitable divergence free vector fields, and $\chi(t,x)$ is a scalar stochastic process which will be linked to the large scales, in order to model the idea that the turbulent small scales are more active where the large scales have more intense variations (e.g. larger shear); $\{W^{k} \}_k$ are
independent scalar Brownian motions. In the replacement, Stratonovich
integrals are used, in accordance with the Wong-Zakai principle (see rigorous
results in \cite{debussche2022second}). Therefore the equation for large
scales, now closed and stochastic, takes the form%
\[
d\omega_{L}+\left(  u_{L}\cdot\nabla\omega_{L}-\nu\Delta\omega_{L}\right)
dt=- \chi\left(  t,x\right)  \sum_{k}\sigma_{k}\cdot\nabla\omega_{L}\circ
dW_{t}^{k}.
\]
Previous works developed this idea in the case when $\chi=1$, see e.g. \cite{FlaLuoWaseda, flapapp2022fromadditive, debussche2022second}. Here we assume
that $\chi$ is a function of $\omega_{L}$, that for notational convenience
will be written as%
\[
\chi\left(  t,x\right)  =f^{\prime}\left(  \omega_{L}\left(
t,x\right)  \right)
\]
for a suitable function $f$. As said above, the heuristic idea is that turbulence is more
developed in regions of high large-scale vorticity, hence the small-scale
noise should be modulated by an increasing function $f^{\prime}$.

This is the motivation for the stochastic model
(\ref{stochastic approximation introduction}) presented in the Introduction.
Our main purpose is showing that it leads to the Smagorinsky type
deterministic equation (\ref{smagorinsky introduction}) in a suitable scaling
limit of the noise.

\section{Functional Setting and Main Results}\label{Sec preliminaries}
Let us set some notation before stating the main contributions of this work. Let $\mathbb{T}^2=\mathbb{R}^2/\mathbb{Z}^2$ be the two dimensional torus and $\mathbb{Z}_0^2=\mathbb{Z}^2\setminus \{0\}$ the nonzero lattice points. Let $(H^{s,p}(\mathbb{T}^2), \lVert\cdot\rVert_{H^{s,p}}),\ s\in\mathbb{R},\ p\in (1,+\infty)$ be the Bessel spaces of zero mean periodic functions. In case of $p=2$, we simply write $H^{s}(\mathbb{T}^2)$ in place of $H^{s,2}(\mathbb{T}^2)$ and we denote by $\langle \cdot,\cdot\rangle_{H^s}$ the corresponding scalar products. In case also $s>0$ we denote by $\langle \cdot,\cdot\rangle_{H^{-s},H^s}$ the dual pairing between $H^s$ and $H^{-s}$. Lastly we denote by $H^{s-}(\mathbb{T}^2)=\cap_{r<s}H^r(\mathbb{T}^2)$.
In case of $s=0$ we will write $L^2(\mathbb{T}^2)$ instead of $H^0(\mathbb{T}^2)$ and we will neglect the subscript in the notation for the norm and the inner product.
Similarly, we introduce the Bessel spaces of zero mean vector fields
\begin{align*}
    \mathbf{H}^{s,p}&= \{(u_1,u_2)^t:\ u_1,u_2\in H^{s,p}(\mathbb{T}^2) \},\\  \langle u, v\rangle_{\mathbf{H}^s}&=\langle u_1,v_1\rangle_{H^s}+\langle u_2,v_2\rangle_{H^s}, \quad \text{for } s\in \mathbb{R}.
\end{align*}
Again, in case of $s=0$ we will write $\mathbf{L}^2$ instead of $\mathbf{H}^0$ and we will neglect the subscript in the notation for the norm and the scalar product.

Let $Z$ be a separable Hilbert space, with associated norm $\| \cdot\|_{Z}$. We denote by $C^w_{\mathcal{F}}\left(  \left[  0,T\right]  ;Z\right) $ the space of weakly continuous adapted processes $\left(  X_{t}\right)  _{t\in\left[
0,T\right]  }$ with values in $Z$ such that
\[
\mathbb{E} \bigg[ \sup_{t\in\left[  0,T\right]  }\left\Vert X_{t}\right\Vert
_{Z}^{2}\bigg]  <\infty
\]
and by $L_{\mathcal{F}}^{p}\left(  0,T;Z\right),\ p\in [1,\infty),$ the space of progressively
measurable processes $\left(  X_{t}\right)  _{t\in\left[  0,T\right]  }$ with
values in $Z$ such that
\[
    \mathbb{E} \bigg[ \int_{0}^{T}\left\Vert X_{t}\right\Vert _{Z}^{p}dt \bigg]
<\infty.
\]

Following the ideas introduced in Section \ref{section heuristic}, we are interested in the following stochastic model with a more precise noise (cf. \cite{galeati2020convergence, flandoli2021scaling})
\begin{equation}\label{introductory equation}
    \begin{cases}
        d\omega_L=(\nu \Delta \omega_L- u_L\cdot\nabla \omega_L)\, dt - \sum_{k\in \mathbb{Z}^2_0}\theta_k \sigma_k\cdot \nabla f(\omega_L)  \circ dW^k, \\
        u_L=-\nabla^{\perp}(-\Delta)^{-1}\omega_L, \\
        \omega_L(0)=\omega_0,
    \end{cases}
\end{equation}
where $f\in C^{1}(\mathbb R;\mathbb R)$, $\theta=(\theta_k)_{k} \in \ell^2(\mathbb Z^2_0)$ satisfies
  \begin{equation}\label{eq:theta}
  \sum_{k\in \mathbb{Z}^2_0} \theta_k^2=1, \quad  \theta_k=\theta_l\quad \mbox{if } |k|=|l|;
  \end{equation}
$\{\sigma_k \}_{k\in \mathbb{Z}^2_0}$ is the standard orthonormal basis of divergence free vector fields in $\mathbf{L}^2$ made by the eigenfunctions of the Stokes operator, i.e.
\begin{align*}
    \sigma_k(x)= \frac{k^{\perp}}{\lvert k\rvert}e_k(x),\quad e_k(x) = \sqrt{2}\begin{cases}
        \cos(2\pi k\cdot x)\quad \text{if } k\in \mathbb{Z}^2_+, \\
        \sin(2\pi k\cdot x)\quad \text{if } k\in \mathbb{Z}^2_-,
    \end{cases}
\end{align*}
where $k^\perp=(k_2, -k_1)$, $\mathbb{Z}^2_+:=\{k\in \mathbb{Z}_0^2: (k_1>0)\text{ or }(k_1=0,\ k_2>0)\}$ and $\mathbb{Z}^2_-:=-\mathbb{Z}^2_+$; $\{W^k \}_{k\in \mathbb{Z}^2_0}$ is a family of real independent Brownian motions. Moreover we assume that
\begin{align}\label{bound f''}
    \lvert f'(x)\rvert \lesssim 1+\lvert x\rvert^{\alpha}
\end{align}
for some $\alpha\in [0,1]$. This implies in particular that
\begin{align}
\lvert f(x)\rvert &\lesssim 1+\lvert x\rvert^{\alpha+1}\label{bound f'} .
\end{align}

In the sequel, we shall omit the subscript $L$ to save notation. System \eqref{introductory equation} can be formulated easily in It\^o form. Indeed, it holds
\begin{align}\label{Ito Stratonovich corrector}
    \sigma_k\cdot \nabla f(\omega)  \circ dW^k&= \notag \sigma_k\cdot \nabla f(\omega) \, dW^k+\frac{1}{2}d\left[\sigma_k\cdot \nabla f(\omega) , W^k\right];
\end{align}
since
  $$\aligned
  d\big(\sigma_k\cdot \nabla f(\omega)\big) &= \sigma_k\cdot \nabla\big(d f(\omega)\big) \\
  &=\sigma_k\cdot \nabla ( f'(\omega)V ) \, dt  - \sum_{l } \theta_l \sigma_k\cdot \nabla\big( f'(\omega)^2 \sigma_l\cdot \nabla \omega \big)  \circ dW^l ,
  \endaligned $$
where $V:= \nu \Delta \omega - u \cdot\nabla \omega$, one has
  $$ \aligned
  d\left[\sigma_k\cdot \nabla f(\omega) , W^k\right] &= - \theta_k \sigma_k\cdot \nabla\big( f'(\omega)^2 \sigma_k\cdot \nabla \omega \big) \, dt\\
  &= - \theta_k\, {\rm div} \big( f'(\omega)^2 (\sigma_k\otimes \sigma_k) \nabla \omega \big) \, dt
  \endaligned $$
which is due to the divergence free property of $\sigma_k$; hence,
  $$\aligned
  & - \sum_{k }\theta_k \sigma_k\cdot \nabla f(\omega) \circ dW^k\\
  &\quad = - \sum_{k}\theta_k \sigma_k\cdot \nabla f(\omega)\, dW^k + \frac{1}2 \sum_{k}\theta_k^2 \, {\rm div} \big( f'(\omega)^2 (\sigma_k\otimes \sigma_k) \nabla \omega \big) \, dt \\
  &\quad= - \sum_{k}\theta_k \sigma_k\cdot \nabla f(\omega)\, dW^k + \frac{1}4 {\rm div} \big( f'(\omega)^2 \nabla \omega \big) \, dt,
  \endaligned $$
where the last step is due to the fact (cf. \cite[Lemma 2.6]{flaluo2020enstrophy} for a proof)
  \begin{equation}\label{eq:covariance-diagonal}
  \sum_{k}\theta_k^2 \, (\sigma_k\otimes \sigma_k)= \frac12 I_2,
  \end{equation}
the latter being the $2\times 2$ unit matrix. Thanks to the computations on the It\^o-Stratonovich corrector above, equation \eqref{introductory equation} can be rewritten as
\begin{equation}\label{Ito equation prefinal}
    \begin{cases}
        d\omega = \big(\nu \Delta \omega-u\cdot\nabla \omega+\frac{1}{4}\operatorname{div}( f'(\omega)^2 \nabla \omega) \big)\, dt - \sum_{k}\theta_k \sigma_k\cdot \nabla f(\omega)\, dW^k, \\
        u =-\nabla^{\perp}(-\Delta)^{-1}\omega ,\\
        \omega(0) =\omega_0.
    \end{cases}
\end{equation}

We introduce the real function $g:\mathbb R\rightarrow \mathbb{R}$ defined as
  $$g(x)=\frac{1}{4}\int_0^x f'(t)^2\, dt,\quad x\in \mathbb R, $$
which satisfies $g(0)=0$ and
\begin{equation}\label{bound g}
\aligned
    \lvert g(y)-g(x)\rvert
    &\lesssim \lvert y-x\rvert+\left\lvert y\lvert y\rvert^{2\alpha}-x\lvert x\rvert^{2\alpha}\right\rvert  \\
    & \lesssim \lvert y-x\rvert\left(1+\lvert x\rvert^{2\alpha}\right)+\lvert y\rvert \left\lvert \lvert y\rvert^{2\alpha}-\lvert x\rvert^{2\alpha} \right\rvert.
\endaligned
\end{equation}
From the definition of $g$ it follows that system \eqref{Ito equation prefinal} can be rewritten as
\begin{equation}\label{Ito equation}
    \begin{cases}
        d\omega = \big(\nu \Delta \omega-u\cdot\nabla \omega +\Delta g(\omega) \big)\, dt - \sum_{k} \theta_k \sigma_k\cdot \nabla f'(\omega)\, dW^k,\\
        u =-\nabla^{\perp}(-\Delta)^{-1}\omega, \\
        \omega(0) =\omega_0.
    \end{cases}
\end{equation}
 The relation between $u$ and $\omega$ can be described in terms of the so-called Biot-Savart operator \begin{align*}
       K\in\mathcal{L}(H^{s,p},\mathbf{H}^{s+1,p}): \quad K[\omega]=-\nabla^{\perp}(-\Delta)^{-1}\omega \quad \text{for } p\in (1,+\infty),\ s\in\mathbb{R}.
   \end{align*}

We are now ready to define our notion of solution for system \eqref{Ito equation}.

\begin{definition}\label{well posed def}
We say that system \eqref{Ito equation} has a weak solution if there exists a filtered probability space $(\Omega,\mathcal{F},\mathcal{F}_t,\mathbb{P})$, a sequence of independent $\mathcal{F}_t$ Brownian motions $W^k$ and $\omega\in C^w_{\mathcal{F}}(0,T;L^2(\mathbb{T}^2))\cap L^2_{\mathcal{F}}(0,T;H^1(\mathbb{T}^2))$ such that for any $\phi\in C^{\infty}(\mathbb{T}^2)$, $\mathbb{P}$-a.s. $\forall\, t\in [0,T]$, it holds
\begin{align*}
    \langle \omega_t,\phi\rangle
    &= \langle \omega_0,\phi\rangle+ \nu \int_0^t \langle \omega_s,\Delta \phi\rangle \, ds+\int_0^t \langle g(\omega_s), \Delta\phi\rangle\, ds\\
    &\quad +\int_0^t \langle \omega_s, K[\omega_s]\cdot\nabla\phi\rangle\, ds + \sum_{k\in \mathbb{Z}^2_0}\int_0^t \theta_k \langle f(\omega_s), \sigma_k\cdot\nabla \phi \rangle\, dW^k_s.
\end{align*}
\end{definition}

Due to the nonlinearities appearing in equation \eqref{Ito equation} the existence of weak solutions is a nontrivial fact which will be proved in Section \ref{sec well posed}. Indeed we will prove the following result.

\begin{theorem}\label{thm well posed}
    For each $\omega_0\in L^2(\mathbb{T}^2)$ there exists at least one weak solution of system \eqref{Ito equation} in the sense of Definition \ref{well posed def}. Moreover \begin{align*}
        \sup_{t\in [0,T]}\lVert \omega_t\rVert^2 +2\nu\int_0^T \lVert \nabla \omega_s\rVert^2\, ds\leq 2\lVert \omega_0\rVert^2 \quad \mathbb{P}-a.s.
    \end{align*}
\end{theorem}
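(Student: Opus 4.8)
The plan is to construct a solution by Galerkin approximation, to derive a pathwise energy estimate that is uniform in the truncation parameter, and then to pass to the limit by a stochastic compactness (Skorokhod) argument. First I would fix the finite-dimensional projection $\Pi_N$ onto $\mathrm{span}\{e_k:|k|\le N\}$ and consider the It\^o SDE obtained by projecting \eqref{Ito equation}, regularizing the coefficients if necessary so that weak existence of $\omega^N$ on $[0,T]$ is guaranteed; since $f\in C^1$ and $g'=\tfrac14(f')^2$ is continuous with the at-most-quadratic growth encoded in \eqref{bound f''}, the projected drift and diffusion have exactly the growth one needs for global solutions once the a priori bound below is in hand.

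The heart of the argument is the energy estimate, which is remarkably clean because of a structural cancellation. Applying It\^o's formula to $\|\omega^N_t\|^2$ and using that $\Pi_N$ is self-adjoint with $\Pi_N\omega^N=\omega^N$, the transport term drops out since $\langle\omega^N,u^N\cdot\nabla\omega^N\rangle=0$ (divergence-free $u^N$), and the martingale term vanishes identically because $\langle\omega^N,\sigma_k\cdot\nabla f(\omega^N)\rangle=\int\sigma_k\cdot\nabla F(\omega^N)\,dx=0$, where $F'(r)=rf'(r)$ and $\mathrm{div}\,\sigma_k=0$. The nonlinear diffusion contributes $-2\int g'(\omega^N)|\nabla\omega^N|^2$, while the It\^o trace correction is $\sum_k\theta_k^2\|\Pi_N(\sigma_k\cdot\nabla f(\omega^N))\|^2\le\sum_k\theta_k^2\|\sigma_k\cdot\nabla f(\omega^N)\|^2=2\int g'(\omega^N)|\nabla\omega^N|^2$ by the covariance identity \eqref{eq:covariance-diagonal} and $g'=\tfrac14(f')^2$. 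These two nonnegative contributions cancel up to the projection loss, leaving the pathwise bound $\|\omega^N_t\|^2+2\nu\int_0^t\|\nabla\omega^N_s\|^2\,ds\le\|\omega_0\|^2$, uniform in $N$ and holding $\omega$-wise; this already implies the claimed inequality, with room to spare, once it is transferred to the limit.

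Next I would establish tightness. The uniform bound gives $\omega^N$ bounded in $L^\infty(0,T;L^2)\cap L^2(0,T;H^1)$, hence in $L^4(0,T;L^4)$ by the two-dimensional Ladyzhenskaya inequality $\|\omega\|_{L^4}^2\lesssim\|\omega\|_{L^2}\|\nabla\omega\|_{L^2}$. Testing the weak form against smooth $\phi$, each drift term is controlled in $L^{4/3}(0,T)$ -- here the growth restriction $\alpha\le1$ enters, as it makes $g(\omega^N)$ bounded in $L^{4/3}((0,T)\times\mathbb T^2)$ and $f(\omega^N)$ bounded in $L^2((0,T)\times\mathbb T^2)$ -- while the stochastic integral is H\"older continuous in time in a negative Sobolev space by Burkholder--Davis--Gundy and Kolmogorov. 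An Aubin--Lions--Simon criterion then yields tightness of the laws of $\omega^N$ in $L^2(0,T;L^2)\cap C([0,T];H^{-\delta})$, and via Prokhorov and the Skorokhod (or Jakubowski) representation theorem I obtain, on a new probability space, copies $\tilde\omega^N\to\tilde\omega$ converging almost surely in this path space together with limiting Brownian motions.

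Finally comes the passage to the limit. Almost sure strong convergence in $L^2(0,T;L^2)$ gives, along a subsequence, convergence almost everywhere in $(t,x)$; combined with the uniform $L^4$ bound and the continuity and growth of $f,g$, a Vitali argument upgrades this to $g(\tilde\omega^N)\to g(\tilde\omega)$ in $L^1$ and $f(\tilde\omega^N)\to f(\tilde\omega)$ in $L^2$ over $(0,T)\times\mathbb T^2$, while the transport term is handled through the strong convergence $K[\tilde\omega^N]\to K[\tilde\omega]$ (Biot--Savart gains a derivative). These are exactly the objects appearing in Definition \ref{well posed def}, so I can pass to the limit in every deterministic term, and in the stochastic integral by the standard convergence lemma for It\^o integrals with almost surely converging integrands; the energy inequality survives by weak lower semicontinuity. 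I expect the genuine obstacle to be precisely this limit passage in the superlinear terms $\Delta g(\omega)$ and $f(\omega)$: reconciling the merely weak convergence in $L^2(0,T;H^1)$ with the nonlinearities forces one to write the diffusion in the form $\langle g(\omega),\Delta\phi\rangle$ and to lean on the $L^4$-integrability coming from the energy estimate, which is available only because of the assumption $\alpha\le1$.
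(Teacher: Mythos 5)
Your proposal follows essentially the same route as the paper: Galerkin approximation; the pathwise energy identity in which the It\^o trace correction $\sum_k\theta_k^2\lVert \Pi_N(\sigma_k\cdot\nabla f(\omega^N))\rVert^2\le\frac12\lVert\nabla f(\omega^N)\rVert^2$ cancels, up to the projection loss, against the Smagorinsky dissipation $-\frac12\langle f'(\omega^N)^2\nabla\omega^N,\nabla\omega^N\rangle$ coming from $2\langle\omega^N,\Delta g(\omega^N)\rangle$; then tightness, Skorokhod representation, and passage to the limit with the stochastic integral treated by a convergence lemma for It\^o integrals with converging integrands and noises. The genuine differences are only in the technical implementation: you propose Ladyzhenskaya $L^4_{t,x}$ bounds plus Vitali convergence and an Aubin--Lions argument giving tightness in $L^2(0,T;L^2)\cap C([0,T];H^{-\delta})$, whereas the paper estimates Fourier-mode increments, derives a fractional Sobolev bound in time, works in the stronger topology $C([0,T];H^{-})\cap L^2(0,T;H^{1-})$, and handles the nonlinearities by interpolation (Lemma \ref{Lemma interpolation}); both implementations are viable. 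One claim of yours is, however, too strong at the endpoint $\alpha=1$: there $\lvert f(\omega^N)\rvert^2\lesssim 1+\lvert\omega^N\rvert^{4}$ is merely \emph{bounded} in $L^1_{t,x}$ by the Ladyzhenskaya estimate, which is not uniform integrability, so Vitali does not yield $f(\tilde\omega^N)\to f(\tilde\omega)$ in $L^2((0,T)\times\mathbb{T}^2)$. Fortunately the stochastic integral only requires $\lVert f(\tilde\omega^N_s)-f(\tilde\omega_s)\rVert_{L^1(\mathbb{T}^2)}\to 0$ in $L^2(0,T)$ (after pairing with $\sigma_k\cdot\nabla\phi\in L^\infty$), and this does follow from your ingredients: $\lVert f(\tilde\omega^N_s)\rVert_{L^1}\lesssim 1+\lVert\omega_0\rVert^{\alpha+1}$ uniformly in $s$ and $N$, Vitali applied in $x$ for a.e.\ fixed $s$ gives pointwise-in-$s$ convergence, and bounded convergence in $s$ concludes. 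With that small repair the argument is complete and matches the paper's.
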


Next, following the idea introduced for the first time in \cite{galeati2020convergence}, we consider a family $\{\theta^N\}_{N\in \mathbb{N}}\subseteq \ell^2(\mathbb{Z}^2_0)$, satisfying relation \eqref{eq:theta} such that
  \begin{equation}\label{eq:theta-N}
   \lim_{N\rightarrow +\infty} \lVert \theta^N\rVert_{\ell^\infty}=0,
  \end{equation}
and we call $\omega^N$ the corresponding weak solution of equation \eqref{Ito equation} with $\{\theta^{N}_k \}_k$ in place of $\{\theta_k \}_k$. In order to complete our plan, we want to show that the law of $\omega^N$ converges weakly to a measure supported on the unique weak solution of the Navier-Stokes equation in vorticity form with Smagorinky correction, namely
\begin{equation}\label{smagorinsky equation}
    \begin{cases}
        \partial_t\overline{\omega}=\nu \Delta \overline{\omega}+\operatorname{\Delta} g(\overline{\omega})-\overline{u}\cdot\nabla \overline{\omega} ,\\
        \overline{u}=-\nabla^{\perp}(-\Delta)^{-1}\overline{\omega}, \\
        \overline{\omega}(0)=\omega_0.
    \end{cases}
\end{equation}

\begin{remark}
   Taking $f(r)=\frac{4}{3} C_s\mathbf{\Delta} \lvert r\rvert^{1/2} r$, $C_s$ and $\mathbf{\Delta}$ being the same as in \eqref{eq:g-prime}, we have $g(r)= \frac{1}{2}(C_s\mathbf{\Delta})^2 r^2 {\rm sign}(r)$, and thus $\operatorname{\Delta} g(\overline{\omega}) = (C_s\mathbf{\Delta})^2{\rm div}(|\overline{\omega}| \nabla \overline{\omega})$. In this way, we recover the Smagorinsky model of \cite{cottet2003vorticity}.
\end{remark}

By a weak solution of \eqref{smagorinsky equation} we mean the following:
\begin{definition}\label{def well smag}
    We say that $\overline{\omega}$ is a weak solution of equation \eqref{smagorinsky equation} if \begin{align*}
        \overline{\omega}\in C_w(0,T;L^2(\mathbb{T}^2))\cap L^2(0,T;H^1(\mathbb{T}^2))
\end{align*}
and for each $\phi \in C^{\infty}(\mathbb{T}^2)$, for all $t\in [0,T]$, one has
\begin{align*}
    \langle \overline{\omega}_t,\phi\rangle-\langle \omega_0,\phi\rangle &= \nu \int_0^t \langle \overline{\omega}_s,\Delta \phi\rangle\, ds +\int_0^t \langle g(\overline{\omega}_s), \Delta\phi\rangle\, ds\\
    &\quad +\int_0^t \langle \overline{\omega}_s, K[\overline{\omega}_s]\cdot\nabla\phi \rangle\, ds .
\end{align*}
\end{definition}

In Section \ref{sect scaling limit} indeed we will first show the uniqueness of the weak solutions of \eqref{smagorinsky equation}, then we will show our main result which reads in the following way.

\begin{theorem}\label{Scaling Limit Smagorinski}
    Assume that $\{\theta^N\}_N \subset \ell^2$ satisfies \eqref{eq:theta} and \eqref{eq:theta-N}. Let $\omega^N$ be a weak solution of \eqref{Ito equation} corresponding to $\theta^N$, and $Q^N$ its law on $C([0,T];H^-(\mathbb{T}^2))\cap L^2(0,T;H^{1-}(\mathbb{T}^2))$. Then the family $\{Q^N \}_N$ is tight on $C([0,T];H^-(\mathbb{T}^2))\cap L^2(0,T;H^{1-}(\mathbb{T}^2))$ and it converges weakly to the Dirac measure $\delta_{\overline{\omega}}$, where $\overline{\omega}$ is the unique weak solution of equation \eqref{smagorinsky equation}.
\end{theorem}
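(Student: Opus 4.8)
The plan is to run the standard tightness-and-identification scheme for scaling limits of transport-noise SPDEs, the two nonstandard features being the superlinear diffusion $\Delta g$ and the solution-dependent noise $\sigma_k\cdot\nabla f(\omega)$. First I would record the uniform-in-$N$ a priori bound: by Theorem \ref{thm well posed} each $\omega^N$ satisfies $\sup_{t\in[0,T]}\lVert\omega^N_t\rVert^2+2\nu\int_0^T\lVert\nabla\omega^N_s\rVert^2\,ds\le 2\lVert\omega_0\rVert^2$ almost surely, with a bound independent of $N$. Since this bound is pathwise and deterministic, all moments are automatically controlled. Hence $\{\omega^N\}$ is bounded, uniformly in $N$, in $L^\infty(0,T;L^2)\cap L^2(0,T;H^1)$, and by the 2D Gagliardo--Nirenberg (Ladyzhenskaya) inequality also in $L^4(0,T;L^4)$. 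This $L^4$ bound is exactly what the growth restriction $\alpha\le 1$ requires: then $\lvert g(\omega^N)\rvert\lesssim 1+\lvert\omega^N\rvert^{2\alpha+1}$ is bounded in $L^{4/(2\alpha+1)}$ with $4/(2\alpha+1)>1$, giving a uniformly integrable family.

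Second, I would establish tightness of $\{Q^N\}$ on $C([0,T];H^-)\cap L^2(0,T;H^{1-})$. The $L^2(0,T;H^{1-})$ component follows from the uniform $L^2(0,T;H^1)$ bound together with the compact embedding $H^1\hookrightarrow\hookrightarrow H^r$ for $r<1$, via an Aubin--Lions--Simon argument. For the $C([0,T];H^-)$ component I would combine the uniform $L^\infty(0,T;L^2)$ bound (placing values in a fixed, compactly embedded ball of $H^r$, $r<0$) with a fractional-in-time equicontinuity estimate obtained from the equation. Writing $\sigma_k\cdot\nabla f(\omega)=\operatorname{div}(f(\omega)\sigma_k)$ and using $\operatorname{div}\colon L^2\to H^{-1}$, the martingale increment obeys, for any $m\ge 1$, $\mathbb{E}\lVert M^N_t-M^N_r\rVert_{H^{-1}}^{2m}\lesssim|t-r|^m$ uniformly in $N$ (BDG plus $\sum_k(\theta^N_k)^2=1$ and the deterministic bound on $\lVert f(\omega^N)\rVert$), while the drift terms $\nu\Delta\omega^N$, $\operatorname{div}(K[\omega^N]\omega^N)$, $\Delta g(\omega^N)$ are Lipschitz in time in a fixed negative Sobolev norm. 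A Kolmogorov continuity criterion then yields the required tightness.

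Third comes the identification of the limit. By Prokhorov and Skorokhod's representation theorem I would pass to a subsequence and realize $\tilde\omega^N\to\tilde\omega$ almost surely in $C([0,T];H^-)\cap L^2(0,T;H^{1-})$ with $\tilde\omega^N\stackrel{d}{=}\omega^N$. Convergence in $L^2(0,T;H^{1-})$ is strong in $L^2(0,T;L^2)$, which lets me pass to the limit in the quadratic transport term $\langle\omega^N,K[\omega^N]\cdot\nabla\phi\rangle$ (a product of two $L^2$-strongly convergent factors, $K$ being bounded from $L^2$ into $\mathbf{H}^1$) and, via a.e. convergence together with the uniform integrability recorded above, in $\langle g(\omega^N),\Delta\phi\rangle$ by Vitali's theorem. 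The decisive point is that the stochastic term disappears: its quadratic variation is
\[
\sum_k\int_0^t(\theta^N_k)^2\langle f(\omega^N_s),\sigma_k\cdot\nabla\phi\rangle^2\,ds\le \lVert\theta^N\rVert_{\ell^\infty}^2\,\lVert\nabla\phi\rVert_\infty^2\int_0^t\lVert f(\omega^N_s)\rVert^2\,ds,
\]
where I used Bessel's inequality for the orthonormal system $\{\sigma_k\}$ in $\mathbf{L}^2$; by the a priori bound and \eqref{eq:theta-N} the right-hand side vanishes, so the martingale converges to $0$ in $L^2$. Thus $\tilde\omega$ satisfies almost surely the weak formulation of Definition \ref{def well smag}.

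Finally, by the uniqueness of weak solutions of \eqref{smagorinsky equation} (established separately in this section, exploiting the monotonicity $g'=\tfrac14(f')^2\ge 0$), one has $\tilde\omega=\overline\omega$ almost surely, so every subsequential weak limit of $\{Q^N\}$ equals the Dirac mass $\delta_{\overline\omega}$; since this limit is unique, the whole family converges, $Q^N\rightharpoonup\delta_{\overline\omega}$. I expect the main obstacle to be the passage to the limit in the superlinear term $\Delta g(\omega^N)$: because $g$ may grow cubically, weak $L^2$ convergence is insufficient, and one genuinely needs the strong $L^2(0,T;L^2)$ convergence produced by the compactness together with the uniform integrability that the hypothesis $\alpha\le 1$ guarantees. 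Handling this term, rather than the comparatively clean vanishing of the noise, is the technical heart of the argument.
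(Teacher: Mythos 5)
Your proposal is correct and follows essentially the same route as the paper: uniform pathwise energy bounds, tightness in $C([0,T];H^-)\cap L^2(0,T;H^{1-})$ via time-equicontinuity in a negative Sobolev space, Skorokhod representation, identification of the limit with the stochastic term killed by the bound $\sum_k(\theta^N_k)^2\langle f(\omega^N),\sigma_k\cdot\nabla\phi\rangle^2\le\lVert\theta^N\rVert_{\ell^\infty}^2\lVert f(\omega^N)\nabla\phi\rVert^2$, and finally uniqueness of the Smagorinsky equation (proved by the $H^{-1}$ method using the monotonicity of $g$) to upgrade subsequential convergence to convergence of the whole family. The only deviations are cosmetic: you pass to the limit in $\langle g(\omega^N),\Delta\phi\rangle$ by Vitali's theorem with the $L^4_{t,x}$ uniform integrability, where the paper uses explicit interpolation estimates, and your claim that the drift is ``Lipschitz in time'' in a negative norm should be weakened to H\"older continuity of order $1\wedge(\tfrac32-\alpha)$, which is all the tightness argument needs.
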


\subsection{Preparatory results}

Before starting, we need to recall some results that we will use in Sections \ref{sec well posed} and \ref{sect scaling limit} in order to prove Theorems \ref{thm well posed} and \ref{Scaling Limit Smagorinski}, see \cite{simon1986compact, MarioMarco} for more details on these results.

In the following $X,\ B,\ Y$ are separable Banach spaces such that
\begin{align*}
    X\stackrel{c}{\hookrightarrow}B\hookrightarrow Y,
\end{align*}
where $\stackrel{c}{\hookrightarrow}$ means compact embedding.

\begin{theorem}\label{compactness 1}
    Let $p,r \in [1,+\infty]$ and $s\in \mathbb R$; assume that $s>0$ if $r\geq p$ or $s>1/r-1/p$ if $r\leq p$. Let $F$ be a bounded subset in $L^p(0,T;X)\cap W^{s,r}(0,T;Y)$. Then $F$ is relatively compact in $L^p(0,T;B)$ (in $C([0,T];B)$ if $p=+\infty$).
\end{theorem}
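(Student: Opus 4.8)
The plan is to reduce the statement to Simon's abstract characterisation of relatively compact subsets of $L^p(0,T;B)$ and then to feed the fractional time-regularity hypothesis into the translation criterion. The backbone is the classical Aubin--Lions--Simon scheme. First I would record an Ehrling-type interpolation inequality: since $X \stackrel{c}{\hookrightarrow} B \hookrightarrow Y$, for every $\eta>0$ there is $C_\eta>0$ with $\|v\|_B \le \eta\|v\|_X + C_\eta\|v\|_Y$ for all $v\in X$. Second, I would invoke the reduction theorem of \cite{simon1986compact}: a family $F$ bounded in $L^p(0,T;X)$ whose time-translates satisfy $\sup_{f\in F}\|\tau_h f - f\|_{L^p(0,T-h;Y)}\to 0$ as $h\to0$ is relatively compact in $L^p(0,T;B)$ (and in $C([0,T];B)$ when $p=\infty$). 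The non-trivial content of this reduction is the verification of the section/translation characterisation of compactness: the sections $\int_{t_1}^{t_2}f\,dt$ are bounded in $X$ by H\"older in time, hence precompact in $B$ by the compact embedding; and the uniform $L^p(0,T-h;B)$-smallness of translates follows from the Ehrling inequality by splitting $\|\tau_h f - f\|_B \le \eta\|\tau_h f - f\|_X + C_\eta\|\tau_h f - f\|_Y$, integrating in time, bounding the first term by $2\eta\sup_{f\in F}\|f\|_{L^p(0,T;X)}$, and letting $\eta\to0$ once the second term has been driven to zero.

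Given this reduction, the entire statement collapses to the single estimate
\[
\sup_{f\in F}\|\tau_h f - f\|_{L^p(0,T-h;Y)}\longrightarrow 0\quad(h\to0),
\]
which is exactly where the hypotheses on $(s,r,p)$ enter. The main analytic tool I would use is the Nikolskii/Besov translation bound: for $0<\sigma<1$ one has $\sup_{0<h<T}h^{-\sigma}\|\tau_h f - f\|_{L^q(0,T-h;Y)}\le C\,[f]_{W^{\sigma,q}(0,T;Y)}$, coming from the embedding $W^{\sigma,q}=B^{\sigma}_{q,q}\hookrightarrow B^{\sigma}_{q,\infty}$; if the available order exceeds $1$ one simply replaces it by some $\sigma_0\in(0,1)$ using $W^{\sigma,q}\hookrightarrow W^{\sigma_0,q}$ on the bounded interval, so that any positive order still yields translation decay.

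In the regime $r\ge p$ the finite time interval gives $\|\cdot\|_{L^p(0,T-h;Y)}\le C\|\cdot\|_{L^r(0,T-h;Y)}$, whence the bound above at integrability $r$ produces $\|\tau_h f - f\|_{L^p(Y)}\le C h^{\sigma}\,[f]_{W^{\sigma,r}(Y)}$ with a uniform seminorm bound over $F$; only $s>0$ is used here. In the regime $r\le p$ I would first pass through the scalar fractional Sobolev embedding in the time variable, $W^{s,r}(0,T;Y)\hookrightarrow W^{\tilde s,p}(0,T;Y)$ with $\tilde s=s-\tfrac1r+\tfrac1p$, which is well defined and strictly positive precisely under $s>\tfrac1r-\tfrac1p$. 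This upgrades the uniform $W^{s,r}(Y)$-bound to a uniform $W^{\tilde s,p}(Y)$-bound, and the translation estimate at integrability $p$ then gives $\|\tau_h f - f\|_{L^p(0,T-h;Y)}\le C h^{\tilde s}\,[f]_{W^{\tilde s,p}(Y)}\to0$ uniformly over $F$. Both regimes thus deliver the displayed estimate, and Simon's reduction concludes. The case $p=\infty$ is handled by the same scheme, replacing the $L^p$-translation criterion by the Arzel\`a--Ascoli (equicontinuity) characterisation of compactness in $C([0,T];B)$, the modulus of continuity being supplied by the very same $Y$-valued translation bound.

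I expect the genuinely delicate step to be the sharp $r\le p$ regime: one must recognise that \emph{boundedness in the strong space $X$ together with fractional regularity in the weak space $Y$} is calibrated exactly by the exponent $\tilde s=s-1/r+1/p$, and one must keep every constant uniform over $F$ so that the translation decay is uniform — without this uniformity the Ehrling splitting does not close. By comparison the $r\ge p$ case and the Ehrling/section machinery are routine.
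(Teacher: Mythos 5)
This statement is recalled in the paper without proof, as a citation to \cite{simon1986compact}, and your argument is a correct reconstruction of essentially the proof given in that reference: the Ehrling splitting together with the section/translation criterion is Simon's reduction (his Theorems 1 and 5), and your two-regime translation estimates --- the Nikolskii bound $\lVert \tau_h f - f\rVert_{L^q(0,T-h;Y)}\le C h^{\sigma}[f]_{W^{\sigma,q}}$ combined with H\"older in time when $r\ge p$, and the fractional embedding $W^{s,r}(0,T;Y)\hookrightarrow W^{s-1/r+1/p,\,p}(0,T;Y)$ when $r\le p$ --- are exactly how the corollary is derived there, with the exponent $s>1/r-1/p$ entering precisely as you identify. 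In a full write-up you should only make explicit that the one-dimensional fractional Sobolev/Besov embedding is valid for an arbitrary Banach target $Y$ (it is, by elementary extension and averaging arguments, and is itself established in \cite{simon1986compact}) and supply the routine continuous-representative step, upgrading the a.e.-defined $L^\infty(0,T;X)$ functions to $B$-valued continuous ones via the $Y$-modulus and Ehrling, before invoking Arzel\`a--Ascoli in the case $p=+\infty$.
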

\begin{theorem}\label{compactness 2}
    Assume that there exists $\theta \in (0,1)$ such that
    \begin{align*}
        \lVert v\rVert_{B}\leq M\lVert v\rVert_X^{1-\theta}\lVert v\rVert_Y^{\theta} \quad \forall\, v\in X .
    \end{align*}
    Let $F$ be bounded in $W^{s_0,r_0}(0,T;X)\cap W^{s_1,r_1}(0,T;Y),\ r_0,\ r_1\in [1,+\infty]$. Define
      $$ s_{\theta}=(1-\theta)s_0+\theta s_1,\quad \frac{1}{r_{\theta}}=\frac{1-\theta}{r_0}+\frac{\theta}{r_1},\quad s_\ast=s_{\theta}-\frac{1}{r_{\theta}}.$$
    If $s_\ast<0$ then $F$ is relatively compact in $L^p(0,T;B)$ for each $p<-1/s_\ast$, and if $s_\ast>0$ then $F$ is relatively compact in $C([0,T];B)$.
\end{theorem}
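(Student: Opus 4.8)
The plan is to prove the statement in two stages. First I would perform an \emph{interpolation in the space variable} that merges the two time-regularity hypotheses into a single fractional Sobolev bound with values in the intermediate space $B$; second I would feed this single bound into Theorem \ref{compactness 1}, now with $B$ itself playing the role of the large space, choosing the time indices so that the Sobolev embedding condition in Theorem \ref{compactness 1} reproduces exactly the threshold $-1/s_\ast$. The only genuine structural input, beyond Theorem \ref{compactness 1}, is the compact embedding $X \stackrel{c}{\hookrightarrow} B$ together with the pointwise interpolation inequality $\lVert v\rVert_B\le M\lVert v\rVert_X^{1-\theta}\lVert v\rVert_Y^{\theta}$.

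For the interpolation step I would show that a set $F$ bounded in $W^{s_0,r_0}(0,T;X)\cap W^{s_1,r_1}(0,T;Y)$ is bounded in $W^{s_\theta,r_\theta}(0,T;B)$. For the Lebesgue part one raises the interpolation inequality to the power $r_\theta$, integrates in time, and applies H\"older with the exponents $\frac{r_0}{(1-\theta)r_\theta}$ and $\frac{r_1}{\theta r_\theta}$, which are conjugate precisely because $\frac1{r_\theta}=\frac{1-\theta}{r_0}+\frac{\theta}{r_1}$; this yields $\lVert v\rVert_{L^{r_\theta}(0,T;B)}\le M\lVert v\rVert_{L^{r_0}(0,T;X)}^{1-\theta}\lVert v\rVert_{L^{r_1}(0,T;Y)}^{\theta}$. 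For the Gagliardo seminorm one applies the interpolation inequality to the difference $v(t)-v(\tau)$ and splits the singular kernel as $|t-\tau|^{-(1+s_\theta r_\theta)}=|t-\tau|^{-a(1+s_0r_0)}\,|t-\tau|^{-b(1+s_1 r_1)}$ with $a=(1-\theta)r_\theta/r_0$ and $b=\theta r_\theta/r_1$; H\"older with exponents $1/a$ and $1/b$ then applies, the bookkeeping being exactly that $a+b=1$ (again the definition of $r_\theta$) and $a(1+s_0r_0)+b(1+s_1r_1)=1+s_\theta r_\theta$ (the definition of $s_\theta$). This gives $[v]_{W^{s_\theta,r_\theta}(0,T;B)}\le M\,[v]_{W^{s_0,r_0}(0,T;X)}^{1-\theta}\,[v]_{W^{s_1,r_1}(0,T;Y)}^{\theta}$, so $F$ is bounded in $W^{s_\theta,r_\theta}(0,T;B)$.

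For the compactness step I would apply Theorem \ref{compactness 1} to the chain $X\stackrel{c}{\hookrightarrow}B\hookrightarrow B$, i.e. with $B$ as both the middle and the large space, the time index data being $(s,r)=(s_\theta,r_\theta)$. The hypothesis that $F$ be bounded in $L^p(0,T;X)$ is obtained from the time regularity of $F$ in $X$ via the scalar embedding $W^{s_0,r_0}(0,T)\hookrightarrow L^{q}(0,T)$, while the bound in $W^{s_\theta,r_\theta}(0,T;B)$ comes from Step 1. When $s_\ast<0$, the admissibility condition of Theorem \ref{compactness 1} in the regime $r_\theta\le p$, namely $s_\theta>\frac1{r_\theta}-\frac1p$, is equivalent to $\frac1p>-(s_\theta-\frac1{r_\theta})=-s_\ast$, that is to $p<-1/s_\ast$, and the conclusion is relative compactness in $L^p(0,T;B)$. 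When $s_\ast>0$, taking $p=+\infty$ the same condition reads $s_\theta>\frac1{r_\theta}$, i.e. $s_\ast>0$, and Theorem \ref{compactness 1} returns relative compactness in $C([0,T];B)$. This reproduces precisely the stated dichotomy.

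The main obstacle is the fractional-in-time bookkeeping of Step 1: the H\"older splitting above is transparent only when $s_0,s_1,s_\theta\in(0,1)$, where the Sobolev--Slobodeckij seminorm is available, and for general real $s_i$ one must instead argue through the Bessel-potential definition of $W^{s,r}$ (or through difference quotients of integer order), checking that the interpolation identity survives. A secondary point to pin down is the integrability feed for Theorem \ref{compactness 1}: one must verify that $F$ is bounded in $L^p(0,T;X)$ for the full range $p<-1/s_\ast$, which follows from the scalar time-embedding of $W^{s_0,r_0}$ when $s_0-1/r_0\le s_\ast$ and otherwise requires interpolating the $X$-bound against the higher time-integrability already encoded in $W^{s_\theta,r_\theta}(0,T;B)$. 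Since relative compactness in $L^{p_0}(0,T;B)$ passes to $L^p(0,T;B)$ for every $p\le p_0$ on the finite interval $[0,T]$, it suffices to secure these two points for exponents arbitrarily close to the endpoint $-1/s_\ast$.
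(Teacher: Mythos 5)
First, a point of reference: the paper does not prove this statement at all --- it is recalled verbatim from Simon's compactness paper \cite{simon1986compact} (together with \cite{MarioMarco}), so the comparison must be with Simon's original argument. Your Step 1 is in fact exactly Simon's interpolation lemma, and your H\"older bookkeeping is correct in the Slobodeckij range: with $a=(1-\theta)r_\theta/r_0$, $b=\theta r_\theta/r_1$ one indeed has $a+b=1$ and $a(1+s_0r_0)+b(1+s_1r_1)=1+s_\theta r_\theta$, and you rightly flag that $s_i\notin(0,1)$ or $r_i=\infty$ requires the reduction via derivatives/primitives. The gap is in Step 2. Theorem \ref{compactness 1} demands boundedness of $F$ in $L^p(0,T;X)$ --- in the \emph{compactly embedded} space, at the very exponent $p$ you are targeting. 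From the hypothesis you only control $W^{s_0,r_0}(0,T;X)$, which (when $s_0>0$) embeds into $L^q(0,T;X)$ with $1/q=1/r_0-s_0$; since $s_\ast$ is the $\theta$-average of $s_i-1/r_i$, whenever $s_1-1/r_1>s_0-1/r_0$ one has $-1/s_\ast>q$, so the whole range $q<p<-1/s_\ast$ is out of reach, and when $s_0\le 0$ (which the statement permits) there is no $L^p(0,T;X)$ bound whatsoever --- in particular your $C([0,T];B)$ case, which requires $F$ bounded in $L^\infty(0,T;X)$, fails outright. Your proposed repair does not close this: interpolating against the integrability ``encoded in $W^{s_\theta,r_\theta}(0,T;B)$'' produces time-integrability with values in $B$, not in $X$, and so cannot be fed back into the hypothesis of Theorem \ref{compactness 1}.

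The correct order of play (and Simon's) is the reverse of your closing remark. Apply Theorem \ref{compactness 1} only once, at a single \emph{modest} exponent $p_1$ for which the $X$-bound is actually available, obtaining relative compactness in $L^{p_1}(0,T;B)$; then upgrade using Step 1 together with the scalar embeddings $W^{s_\theta,r_\theta}(0,T)\hookrightarrow L^{-1/s_\ast}(0,T)$ when $s_\ast<0$, respectively $W^{s_\theta,r_\theta}(0,T)\hookrightarrow C^{0,s_\ast}([0,T])$ when $s_\ast>0$. A sequence converging in $L^{p_1}(0,T;B)$ and bounded in $L^{-1/s_\ast}(0,T;B)$ converges in $L^p(0,T;B)$ for every $p<-1/s_\ast$ by the elementary inequality $\lVert f\rVert_{L^p}\le \lVert f\rVert_{L^{p_1}}^{1-\lambda}\lVert f\rVert_{L^{-1/s_\ast}}^{\lambda}$; and equi-H\"older continuity in $B$ combined with $L^{p_1}$-convergence yields uniform convergence, giving the $C([0,T];B)$ case with no $L^\infty(0,T;X)$ input. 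So compactness is cheap at small $p$ and is propagated \emph{up} by endpoint boundedness; your plan to ``secure exponents arbitrarily close to the endpoint'' directly via Theorem \ref{compactness 1} and then pass down is the step that cannot be executed. (Even the low-exponent compactness needs Simon's primitive/derivative reduction when $s_0\le 0$, since then $F$ need not consist of $X$-valued integrable functions.)
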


\begin{lemma} \label{lemma Marco Mario}
Let $( \Omega, \mathcal{ A}, \mathbb{ P})$ be a probability space,
$\mathcal{U}$ and $\mathcal{H}$ separable Hilbert spaces.
Assume $ W= \sum_{k\geq 0} W_k e_k$ is an $(\mathcal{ F}_t)_{t\in[0,T]}$ cylindrical Brownian motion (over $\mathcal{U}$), while $ W^n= \sum_{k\ge 0} W^n_k e_k$ are $(\mathcal{ F}^n_t)_{t\in[0,T]}$ cylindrical Brownian motions (over $\mathcal{U}$). Assume that $ G$ is an $(\mathcal{ F}_t)_{t\in[0,T]}$ progressively measurable process which belongs to $L^2([0,T], L_2(\mathcal{U}, \mathcal H))$ $ \mathbb{P}$-a.s., while $ G^n$ are $(\mathcal{ F}^n_t)_{t\in[0,T]}$ progressively measurable processes which belong to $L^2([0,T], L_2(\mathcal{U}, \mathcal H))$ $ \mathbb{P}$-a.s.. If
\begin{subequations}\label{eq:ConvAsmp}
\begin{gather}
   W^{n}_k \rightarrow  W_k  \quad
  \textrm{ in probability in }
  C([0,T], \mathbb{R})\quad \forall k\ge 0,
  \label{eq:ConvAsmpNoise}\\
   G^{n} \rightarrow  G \quad
  \textrm{ in probability in }
  L^2([0,T]; L_2(\mathcal{U},  \mathcal H)),
  \label{eq:ConvAsmpStochIntegrand}
\end{gather}
\end{subequations}
then
\begin{equation}\label{eq:StochasticVitaliConc}
\sup_{t\in[0,T]}\left\Vert \int_0^t  G^n\, d W^n - \int_0^t  G\, d W \right\Vert_{\mathcal H} \to 0
\quad \textrm{ in probability}.
\end{equation}
\end{lemma}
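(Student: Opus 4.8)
The plan is to prove the lemma by the classical subsequence principle combined with a three-parameter approximation---localization in magnitude, truncation of the noise modes, and time discretization---which reduces the stochastic integrals to finite Riemann-type sums on which the pathwise convergence of the integrators can be exploited. Since a sequence converges in probability exactly when every subsequence admits a further subsequence converging almost surely, it suffices to fix an arbitrary subsequence and, using \eqref{eq:ConvAsmpNoise}--\eqref{eq:ConvAsmpStochIntegrand} together with a diagonal extraction, pass to a further subsequence (not relabelled) along which $W^n_k\to W_k$ almost surely in $C([0,T];\mathbb{R})$ for every $k$ and $G^n\to G$ almost surely in $L^2([0,T];L_2(\mathcal{U},\mathcal{H}))$. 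It then remains to establish \eqref{eq:StochasticVitaliConc} in probability along this subsequence.

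First I would localize. For $R>0$ set
$$\tau_R=\inf\Big\{t\ge 0:\int_0^t\lVert G_s\rVert_{L_2(\mathcal{U},\mathcal{H})}^2\,ds\ge R\Big\}\wedge T,\qquad \tau^n_R=\inf\Big\{t\ge 0:\int_0^t\lVert G^n_s\rVert_{L_2(\mathcal{U},\mathcal{H})}^2\,ds\ge R\Big\}\wedge T.$$
Since $\int_0^T\lVert G_s\rVert^2_{L_2}\,ds<\infty$ almost surely, $\mathbb{P}(\tau_R<T)\to 0$ as $R\to\infty$; because $G^n\to G$ in $L^2$ almost surely the running integrals $t\mapsto\int_0^t\lVert G^n_s\rVert^2\,ds$ converge uniformly almost surely, which gives $\tau^n_R\to\tau_R$ for all but countably many $R$ and, for such $R$, $\limsup_n\mathbb{P}(\tau^n_R<T)$ small. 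Splitting off the event $\{\tau_R<T\}\cup\{\tau^n_R<T\}$, whose contribution is then uniformly small in $n$, it is enough to prove for each admissible $R$ that the stopped integrals $\int_0^{t\wedge\tau^n_R}G^n\,dW^n$ converge to $\int_0^{t\wedge\tau_R}G\,dW$ in probability, uniformly in $t$. The gain is the deterministic bound $\int_0^{\tau^n_R}\lVert G^n_s\rVert_{L_2}^2\,ds\le R$ (and likewise for $G$), which lets me apply the It\^o isometry and the Burkholder--Davis--Gundy inequality with constants independent of $n$---a feature of cylindrical noise that is essential here.

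Next I would introduce two further approximations. Fixing a mesh $1/m$ with nodes $t_i$, I define the adapted elementary (backward time-averaged) process $[G]_m(t)=\sum_i\big(\tfrac{1}{t_i-t_{i-1}}\int_{t_{i-1}}^{t_i}G_s\,ds\big)\mathbf{1}_{(t_i,t_{i+1}]}(t)$, and similarly $[G^n]_m$; these satisfy $[G^n]_m\to[G]_m$ almost surely in $L^2([0,T];L_2)$ for fixed $m$, while $[G]_m\to G$ and $[G^n]_m\to G^n$ in $L^2([0,T];L_2)$ as $m\to\infty$. Writing the cylindrical integral as $\sum_k\int(\,\cdot\,e_k)\,dW_k$, I also truncate to the first $K$ modes via the projection $P_K$. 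The decisive observation is that for fixed $m$ and $K$ the stopped integral of $P_K[G^n]_m$ is the \emph{finite} sum
$$\sum_{i}\sum_{k\le K}\big([G^n]_{m,i}e_k\big)\,\big(W^n_k(t_{i+1}\wedge t\wedge\tau^n_R)-W^n_k(t_i\wedge t\wedge\tau^n_R)\big),$$
depending on the integrator only through finitely many path increments of $W^n_k$. Using the almost sure uniform convergence $W^n_k\to W_k$, the convergence $[G^n]_{m,i}\to[G]_{m,i}$, and $\tau^n_R\to\tau_R$, this finite sum converges, uniformly in $t$ and almost surely, to the corresponding expression for $(G,W)$; this is the one place where the pathwise hypothesis on the integrators is actually used.

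Finally I would close with a triangle-inequality argument, controlling the three approximation errors uniformly in $n$: the mode-truncation error $\mathbb{E}\sup_t\lVert\int_0^{t\wedge\tau^n_R}(I-P_K)[G^n]_m\,dW^n\rVert_{\mathcal{H}}^2\lesssim\mathbb{E}\int_0^{\tau^n_R}\lVert(I-P_K)[G^n]_m\rVert_{L_2}^2\,ds$, which tends to $0$ as $K\to\infty$ by Hilbert--Schmidt summability and the bound $R$; the discretization error $\mathbb{E}\sup_t\lVert\int_0^{t\wedge\tau^n_R}(G^n-[G^n]_m)\,dW^n\rVert_{\mathcal{H}}^2\lesssim\mathbb{E}\int_0^{\tau^n_R}\lVert G^n-[G^n]_m\rVert_{L_2}^2\,ds$, which tends to $0$ as $m\to\infty$ by dominated convergence using $R$; and the analogous errors for the limit objects $(G,W)$. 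One first chooses $R$ large to absorb the localization error, then $K$ and $m$ large to absorb the truncation and discretization errors uniformly in $n$, and only then lets $n\to\infty$ using the finite-sum convergence above. The hardest point, and the reason for this whole scheme, is that $G$ and $W^n$ are carried by different filtrations, so that neither the integrands nor the integrators may be subtracted directly; routing everything through adapted Riemann-sum approximations, while keeping all error estimates uniform in $n$ via the $n$-independent It\^o/BDG constants, is exactly what overcomes this obstacle.
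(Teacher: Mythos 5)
The paper does not actually prove Lemma \ref{lemma Marco Mario}: it is stated in the ``Preparatory results'' subsection as a recalled tool, with the reader sent to the cited references for details, so there is no in-paper argument to compare yours against. What you have written is, in substance, the classical proof of this convergence result from the literature: reduce to almost sure convergence by the subsequence principle and a diagonal extraction, localize by stopping times to gain $n$-independent It\^o/BDG constants, approximate the integrand by adapted backward-averaged elementary processes and truncate to finitely many noise modes, and then pass to the limit in the resulting finite Riemann sums using the pathwise uniform convergence $W^n_k\to W_k$. This is sound, and you correctly identify the crux (different filtrations forbid subtracting integrands or integrators directly, forcing the detour through elementary approximations).

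Two points deserve tightening. First, your claim that the discretization and mode-truncation errors tend to $0$ \emph{uniformly in $n$} is not automatic as stated; it follows from the contraction property of the averaging operator, namely $\lVert G^n-[G^n]_m\rVert_{L^2}\le 2\lVert G^n-G\rVert_{L^2}+\lVert G-[G]_m\rVert_{L^2}$ and similarly $\lVert (I-P_K)[G^n]_m\rVert_{L^2}\le \lVert G^n-G\rVert_{L^2}+\lVert (I-P_K)[G]_m\rVert_{L^2}$, which reduce the $n$-dependent errors to a fixed error for the limit pair plus a quantity vanishing as $n\to\infty$; this should be said explicitly, since it is what legitimizes your order of limits ($R$, then $K,m$, then $n$). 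Second, the assertion ``$\tau^n_R\to\tau_R$ for all but countably many $R$'' is delicate because the exceptional set of levels $R$ depends on $\omega$; the cleaner standard route is to choose $R$ with $\mathbb{P}\big(\int_0^T\lVert G_s\rVert^2_{L_2}\,ds=R\big)=0$ and note that on the event $\big\{\int_0^T\lVert G_s\rVert^2_{L_2}\,ds<R\big\}$ one has $\tau^n_R=\tau_R=T$ for all $n$ large, so the stopped and unstopped integrals eventually coincide there, while the complementary event has small probability. With these repairs your outline is a complete and correct proof, matching the argument behind the result the paper quotes.
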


In order to identify our limits we will use the following lemma on interpolation spaces.

\begin{lemma}\label{Lemma interpolation}
    Let $\chi_n,\chi\in L^{\infty}(0,T;H^-(\mathbb{T}^2))\cap L^2(0,T;H^{1-}(\mathbb{T}^2))$ such that \begin{align}\label{convergence}
        \chi_n\rightarrow \chi \quad \text{in } L^{\infty}(0,T;H^-(\mathbb{T}^2))\cap L^2(0,T;H^{1-}(\mathbb{T}^2)).
    \end{align}
    Then $\forall\, \beta> 2,\ \gamma\in [0,1)$ such that $\beta\gamma<2$
    \begin{align*}
    \chi_n\rightarrow \chi \in L^{\beta}(0,T;H^{\gamma}(\mathbb{T}^2)).
    \end{align*}
\end{lemma}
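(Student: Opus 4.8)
The plan is to prove Lemma \ref{Lemma interpolation} by interpolating between the two convergence modes in \eqref{convergence}, using the standard Sobolev interpolation inequality together with H\"older's inequality in time. The heart of the matter is that the hypothesis gives control of $\chi_n-\chi$ in $L^\infty$ in time at spatial regularity just below $0$, and in $L^2$ in time at spatial regularity just below $1$; the target regularity $H^\gamma$ with $\gamma\in[0,1)$ should be reachable by an interpolation whose weight $\theta$ satisfies $\gamma = (1-\theta)\cdot 0 + \theta\cdot 1 = \theta$ (up to the ``minus'' losses), while the time integrability $\beta$ is controlled by the constraint coming from H\"older.

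More concretely, I would first fix a small parameter to absorb the $H^-$ and $H^{1-}$ losses: choose $s<0$ close to $0$ and $t<1$ close to $1$ so that the hypothesis gives $\chi_n\to\chi$ in $L^\infty(0,T;H^s)$ and in $L^2(0,T;H^t)$. For fixed time, the interpolation inequality for Bessel spaces yields, for an appropriate $\theta\in(0,1)$,
\begin{align*}
    \lVert \chi_n(r)-\chi(r)\rVert_{H^\gamma} \lesssim \lVert \chi_n(r)-\chi(r)\rVert_{H^s}^{1-\theta}\,\lVert \chi_n(r)-\chi(r)\rVert_{H^t}^{\theta},
\end{align*}
where $\gamma=(1-\theta)s+\theta t$; since $\gamma\in[0,1)$ and we may take $s,t$ as close to $0,1$ as we like, such a $\theta\in(0,1)$ exists. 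Raising this to the power $\beta$ and integrating in time, I would bound the $L^\infty$-in-time factor by its supremum and apply H\"older in time to the remaining $L^2$-in-time factor, giving
\begin{align*}
    \int_0^T \lVert \chi_n(r)-\chi(r)\rVert_{H^\gamma}^\beta\, dr \lesssim \Big(\sup_{r}\lVert \chi_n(r)-\chi(r)\rVert_{H^s}\Big)^{\beta(1-\theta)} \int_0^T \lVert \chi_n(r)-\chi(r)\rVert_{H^t}^{\beta\theta}\, dr.
\end{align*}
The time integral is finite and tends to $0$ provided $\beta\theta\le 2$, i.e. the exponent on the $L^2$-in-time quantity does not exceed the available integrability; this is exactly where the condition $\beta\gamma<2$ enters, since $\theta$ can be taken arbitrarily close to $\gamma$ (as $s\to 0$, $t\to 1$), so $\beta\theta<2$ is guaranteed by $\beta\gamma<2$ after choosing $s,t$ suitably. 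The $L^\infty$ factor then forces the whole right-hand side to $0$.

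The main obstacle, and the only real subtlety, is the bookkeeping of the ``minus'' spaces: one must verify that the losses hidden in $H^-$ and $H^{1-}$ can always be chosen small enough that the interpolation exponent $\theta$ stays strictly below the threshold $2/\beta$ while still producing the target regularity $\gamma$. This amounts to checking that the open conditions $\gamma<1$ and $\beta\gamma<2$ leave room to pick $s<0$ near $0$ and $t<1$ near $1$ with $\gamma=(1-\theta)s+\theta t$ and $\beta\theta<2$ simultaneously; a short continuity/compactness argument in the parameters $(s,t,\theta)$ settles this. Everything else is routine application of the interpolation inequality and H\"older, so I expect the proof to be quite short once the parameter choice is made explicit.
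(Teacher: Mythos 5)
Your proposal is correct and follows essentially the same route as the paper: the authors also fix small losses $\delta,\delta'>0$ (your $t=1-\delta$, $s=-\delta'$), apply the spatial interpolation inequality with exponent $\theta=\frac{\gamma+\delta'}{1-\delta+\delta'}$, pull out the $L^\infty$-in-time factor, and check that the remaining time exponent $\beta\theta$ can be kept $\le 2$ precisely because $\beta\gamma<2$ leaves room in the parameter choice. The only cosmetic difference is that the paper makes the admissible range of $(\delta,\delta')$ explicit via the inequalities $1-\delta>\gamma$, $2-2\delta-\beta\gamma>0$, $\delta'<\frac{2-2\delta-\beta\gamma}{\beta-2}$, whereas you argue by continuity in the parameters; both are fine.
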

\begin{proof}
    Let $\delta,\ \delta'>0$ such that \begin{align}\label{assumption deltas}
        1-\delta>\gamma, \quad 2-2\delta-\beta\gamma>0, \quad \delta'<\frac{2-2\delta-\beta\gamma}{\beta-2}.
    \end{align} From our assumptions $\chi_n\rightarrow \chi \in L^{\infty}(0,T;H^{-\delta'}(\mathbb{T}^2))\cap L^{2}(0,T;H^{1-\delta}(\mathbb{T}^2))$. Then the thesis follows by interpolation inequalities and H\"older inequality. Indeed, it holds
    \begin{align*}
        \int_0^T\! \lVert \chi_n(t)-\chi(t)\rVert_{H^{\gamma}}^\beta dt
        & \leq \int_0^T\! \lVert \chi_n(t)-\chi(t)\rVert_{H^{1-\delta}}^{\beta\frac{\gamma+\delta'}{1-\delta+\delta'}} \lVert \chi_n(t)-\chi(t)\rVert_{H^{-\delta'}}^{\beta\frac{1-\gamma-\delta}{1-\delta+\delta'}} dt\\
        & \leq \lVert \chi_n- \chi\rVert_{L^{\infty}_t H^{-\delta'}_x}^{\beta\frac{1-\gamma-\delta}{1-\delta+\delta'}} \int_0^T\! \lVert \chi_n(t)-\chi(t)\rVert_{H^{1-\delta}}^{\beta\frac{\gamma+\delta'}{1-\delta+\delta'}} dt\\
        & \lesssim \lVert \chi_n- \chi\rVert_{L^{\infty}_t H^{-\delta'}_x}^{\beta\frac{1-\gamma-\delta}{1-\delta+\delta'}} \int_0^T\! \lVert \chi_n(t)\rVert_{H^{1-}}^{\beta\frac{\gamma+\delta'}{1-\delta+\delta'}} \!+ \! \lVert \chi(t)\rVert_{H^{1-}}^{\beta\frac{\gamma+\delta'}{1-\delta+\delta'}} dt,
    \end{align*}
    where $\|\cdot \|_{L^{\infty}_t H^{-\delta'}_x}$ is the norm in $L^\infty(0,T;H^{-\delta'})$. Under our assumptions on $\delta,\ \delta'$ it follows that $\beta\frac{\gamma+\delta'}{1-\delta+\delta'}\leq 2$. Therefore we have the thesis thanks to relation \eqref{convergence}.
\end{proof}

\section{Existence of solutions}\label{sec well posed}

Our approach for showing the existence of martingale solutions of system \eqref{Ito equation} follows by a standard compactness argument. See for example \cite[Section 2.4]{FlaLuoWaseda} and the references therein for some discussions on this method and further examples of application.

\subsection{Galerkin Approximation}

We introduce a sequence of Galerkin approximations $\omega^n$. Given the orthogonal projector $\Pi^n:L^2(\mathbb{T}^2)\rightarrow {\rm span}\{e_l,\ \lvert l\rvert\leq n\}$, we look for
\begin{align*}
    \omega^n(t)=\sum_{\lvert l \rvert \leq n} c_l(t)\, e_l
\end{align*}
such that $\forall\, \phi\in \Pi^n(L^2(\mathbb{T}^2))$, $\mathbb{P}$-a.s. $\forall\, t\in [0,T]$, it holds
\begin{align*}
    \langle \omega^n_t,\phi\rangle&= \langle \omega^n_0,\phi\rangle + \nu \int_0^t \langle \omega^n_s,\Delta \phi\rangle \, ds +\int_0^t \langle g(\omega^n_s), \Delta\phi\rangle\, ds\\
    &\quad +\int_0^t \langle K[\omega^n_s]\cdot\nabla\phi,\omega^n_s\rangle\, ds + \sum_{k\in \mathbb{Z}^2_0}\int_0^t \theta_k \langle \sigma_k\cdot\nabla \phi,f(\omega^n_s) \rangle\, dW^k_s,
\end{align*}
where $\omega_0^n=\Pi^n\omega_0$. Local existence of the solution $\omega^n$ is a classical fact due to the regularity of the coefficients appearing in the equation, see for example \cite{karatzas1991brownian, skorokhod1982studies}. Global existence follows from the following a priori estimates.

\begin{lemma}\label{a priori estimate Galerkin}
$\mathbb{P}$-a.s., $\omega^n$ satisfies
\begin{align}
  \lVert \omega^n_t\rVert^2+2\nu\int_0^t \lVert \nabla \omega^n_s\rVert^2\, ds \leq  \lVert \omega^n_0\rVert^2 &\leq \lVert \omega_0\rVert^2 \label{a priori 1}.
\end{align}

\end{lemma}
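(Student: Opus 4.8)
The plan is to apply the finite-dimensional Itô formula to $\lVert\omega^n_t\rVert^2=\sum_{\lvert l\rvert\le n}c_l(t)^2$, a smooth function of the coefficients solving the Galerkin SDE system, and to check that every term in the resulting drift is nonpositive while the stochastic integral vanishes identically. Writing the Galerkin equation in Itô form as $d\omega^n=\Pi^n\big[\nu\Delta\omega^n-K[\omega^n]\cdot\nabla\omega^n+\Delta g(\omega^n)\big]\,dt-\sum_k\theta_k\Pi^n[\sigma_k\cdot\nabla f(\omega^n)]\,dW^k$, Itô's formula gives
\begin{align*}
d\lVert\omega^n_t\rVert^2=2\langle\omega^n,d\omega^n\rangle+\sum_k\theta_k^2\lVert\Pi^n[\sigma_k\cdot\nabla f(\omega^n)]\rVert^2\,dt.
\end{align*}
First I would dispose of the two terms that vanish. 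Since $\Pi^n$ is self-adjoint and fixes $\omega^n$, the viscous contribution is $2\nu\langle\omega^n,\Delta\omega^n\rangle=-2\nu\lVert\nabla\omega^n\rVert^2$, and the transport contribution is $-2\langle\omega^n,K[\omega^n]\cdot\nabla\omega^n\rangle=0$ by the divergence-free property of $u^n=K[\omega^n]$.

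For the martingale part I would introduce the primitive $h$ with $h'(r)=rf'(r)$, so that $\omega^n\,\sigma_k\cdot\nabla f(\omega^n)=\sigma_k\cdot\nabla h(\omega^n)$; then each coefficient satisfies $\langle\omega^n,\sigma_k\cdot\nabla f(\omega^n)\rangle=\int_{\mathbb T^2}\sigma_k\cdot\nabla h(\omega^n)\,dx=0$, once more because $\sigma_k$ is divergence free. Hence the stochastic integral disappears, and the estimate will hold pathwise (i.e.\ $\mathbb P$-a.s.\ and for every $t$), not merely in expectation.

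The main point — and the only place where the truncation must be handled with care — is the interplay between the nonlinear dissipation from $\Delta g$ and the Itô–Stratonovich correction. Using self-adjointness of $\Pi^n$ again, the $\Delta g$ drift contributes $2\langle\omega^n,\Delta g(\omega^n)\rangle=-2\langle\nabla\omega^n,g'(\omega^n)\nabla\omega^n\rangle=-\tfrac12\int_{\mathbb T^2} f'(\omega^n)^2\lvert\nabla\omega^n\rvert^2\,dx$, recalling $g'=\tfrac14(f')^2$. On the other hand the projection can only shrink the correction, $\lVert\Pi^n[\sigma_k\cdot\nabla f(\omega^n)]\rVert^2\le\lVert\sigma_k\cdot\nabla f(\omega^n)\rVert^2$, and the covariance identity \eqref{eq:covariance-diagonal} yields $\sum_k\theta_k^2\lVert\sigma_k\cdot\nabla f(\omega^n)\rVert^2=\tfrac12\int_{\mathbb T^2} f'(\omega^n)^2\lvert\nabla\omega^n\rvert^2\,dx$. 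Therefore the sum of the $\Delta g$ drift and the correction term is $\le 0$: without the projection the two would cancel exactly, and the truncation only pushes the inequality in the favorable direction. I expect this to be the crux, since the a priori bound survives the Galerkin cut-off precisely because $\Pi^n$ diminishes the noise contribution while leaving the dissipative drift unchanged (through self-adjointness).

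Collecting the terms I obtain $d\lVert\omega^n_t\rVert^2\le-2\nu\lVert\nabla\omega^n_t\rVert^2\,dt$ pathwise; integrating in time and using continuity gives $\lVert\omega^n_t\rVert^2+2\nu\int_0^t\lVert\nabla\omega^n_s\rVert^2\,ds\le\lVert\omega^n_0\rVert^2$ for all $t$ simultaneously, and the final inequality $\lVert\omega^n_0\rVert^2=\lVert\Pi^n\omega_0\rVert^2\le\lVert\omega_0\rVert^2$ follows since $\Pi^n$ is an orthogonal projection. This bound also rules out blow-up of the coefficients, giving the global existence invoked before the lemma.
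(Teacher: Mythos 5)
Your proposal is correct and follows essentially the same route as the paper: Itô's formula for $\lVert\omega^n\rVert^2$, vanishing of the transport and martingale terms via the divergence-free structure (the paper uses the primitive $F$ with $F'=f$ where you use $h$ with $h'(r)=rf'(r)$, a cosmetic difference), and the key observation that $\lVert\Pi^n[\sigma_k\cdot\nabla f(\omega^n)]\rVert^2\le\lVert\sigma_k\cdot\nabla f(\omega^n)\rVert^2$ combined with \eqref{eq:covariance-diagonal} makes the Itô correction dominated by the $\Delta g$ dissipation. No gaps.
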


\begin{proof}
By It\^o formula and recalling the definition of $g$ we have \begin{align*}
        d\lVert \omega^n\rVert^2+2\nu \lVert \nabla \omega^n\rVert^2\, dt
        &= -2\langle K[\omega^n]\cdot\nabla \omega^n, \omega^n\rangle\, dt - \frac12 \langle f'(\omega^n)^2 \nabla \omega^n, \nabla\omega^n\rangle\, dt\\
        &\quad -2 \sum_{k} \theta_k\langle \sigma_k\cdot\nabla\omega^n,f(\omega^n)\rangle\, dW^k\\
        &\quad + \sum_{k} \theta_k^2 \lVert \Pi^n (\sigma_k \cdot \nabla f(\omega^n))\rVert^2\, dt.
    \end{align*}
    The first and the third terms are identically equal to $0$ due to the classical properties of the trilinear form of Navier-Stokes equations and the following relation:
    \begin{equation}\label{conservation of enstrophy}
        \langle \sigma_k\cdot\nabla\omega^n,f(\omega^n)\rangle =\langle \sigma_k,\nabla F(\omega^n)\rangle \notag= -\langle\operatorname{div}\sigma_k,F(\omega^n)\rangle=0,
    \end{equation} where the function $F$ above is a primitive of $f.$
    Therefore we are left to show that \begin{align*}
        -\frac12 \langle f'(\omega^n)^2 \nabla \omega^n, \nabla\omega^n\rangle +\sum_{k} \theta_k^2 \lVert \Pi^n (\sigma_k \cdot \nabla f(\omega^n))\rVert^2\leq 0.
    \end{align*}
    The last inequality is due to
    \begin{align*}
        \sum_{k} \theta_k^2 \lVert \Pi^n (\sigma_k \cdot \nabla f(\omega^n))\rVert^2
        & \leq \sum_{k} \theta_k^2 \lVert \sigma_k \cdot \nabla f(\omega^n)\rVert^2\\
        & = \sum_{k} \theta_k^2 \int_{\mathbb T^2} (\nabla f(\omega^n))^\ast (\sigma_k\otimes \sigma_k) \nabla f(\omega^n)\, dx \\
        & = \frac12\lVert \nabla f(\omega^n)\rVert^2 = \frac12 \langle f'(\omega^n)^2 \nabla \omega^n, \nabla\omega^n\rangle,
    \end{align*}
where in the third step we have used \eqref{eq:covariance-diagonal}.
\end{proof}

Lemma \ref{a priori estimate Galerkin} shows in particular that $\{\omega^n\}_{n\geq 1}$ is bounded in $L^p(\Omega;L^p(0,T;L^2))\cap L^2(\Omega;L^2(0,T;H^1)).$ In order to apply Theorem \ref{compactness 1} and Theorem \ref{compactness 2} we need some energy estimates in $W^{s,r}(0,T;H^{-\beta})$, $s\geq 0, r\geq 2, \beta> 0 $ satisfying suitable conditions. To this end we first prove the following Lemma.

\begin{lemma}\label{solution vs eigenfunctions}
    For each $M\in \mathbb{N}$, there exists a constant $C$ independent of $n$ such that for all $ 0\leq s\leq t\leq T$ it holds
    \begin{align*}
        \mathbb{E}\left[\langle \omega^n_t-\omega^n_s,e_l\rangle^M\right]& \leq C \left(1+\lVert \omega_0\rVert^{M(2\vee (2\alpha+1))}\right)\lvert l\rvert^{2M}\lvert t-s\rvert^{M/2},
    \end{align*}
    where $\alpha\in [0,1]$ is the parameter in \eqref{bound f''}.
\end{lemma}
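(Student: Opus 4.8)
The plan is to test the Galerkin equation against the single eigenfunction $\phi=e_l$ (it suffices to treat $|l|\le n$, since otherwise $\langle \omega^n_t-\omega^n_s,e_l\rangle=0$ and the bound is trivial) and to estimate the resulting increment term by term. Writing
\[
\langle \omega^n_t-\omega^n_s,e_l\rangle = I_1+I_2+I_3+I_4,
\]
with $I_1=\nu\int_s^t\langle \omega^n_r,\Delta e_l\rangle\,dr$, $I_2=\int_s^t\langle g(\omega^n_r),\Delta e_l\rangle\,dr$, $I_3=\int_s^t\langle K[\omega^n_r]\cdot\nabla e_l,\omega^n_r\rangle\,dr$ the drift contributions and $I_4=\sum_k\int_s^t\theta_k\langle \sigma_k\cdot\nabla e_l,f(\omega^n_r)\rangle\,dW^k_r$ the stochastic one, I would use $|a+b+c+d|^M\le C_M(|a|^M+|b|^M+|c|^M+|d|^M)$ and bound $\mathbb{E}[|I_j|^M]$ separately (replacing $\langle\cdot\rangle^M$ by $|\langle\cdot\rangle|^M$, which only strengthens the statement). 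Throughout I would exploit $\Delta e_l=-(2\pi|l|)^2 e_l$, the pointwise bounds $\|e_l\|_{L^\infty}\lesssim1$ and $\|\nabla e_l\|_{L^\infty}\lesssim|l|$, the pathwise a priori estimates $\sup_r\|\omega^n_r\|\le\|\omega_0\|$ and $\int_0^T\|\nabla\omega^n_r\|^2\,dr\lesssim\|\omega_0\|^2$ from Lemma~\ref{a priori estimate Galerkin}, and $\|K[\omega^n_r]\|_{\mathbf{L}^2}\lesssim\|\omega^n_r\|$ from the mapping properties of the Biot--Savart operator.

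The two easy drift terms are dispatched pathwise. For $I_1$ one has $|\langle\omega^n_r,\Delta e_l\rangle|\le(2\pi|l|)^2\|\omega^n_r\|\le(2\pi|l|)^2\|\omega_0\|$, so $|I_1|\lesssim|l|^2\|\omega_0\|\,|t-s|$; for $I_3$, $|\langle K[\omega^n_r]\cdot\nabla e_l,\omega^n_r\rangle|\le\|\nabla e_l\|_{L^\infty}\|K[\omega^n_r]\|_{\mathbf{L}^2}\|\omega^n_r\|\lesssim|l|\,\|\omega_0\|^2$, hence $|I_3|\lesssim|l|\,\|\omega_0\|^2|t-s|$. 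Raising to the $M$-th power and using $|t-s|^M\le T^{M/2}|t-s|^{M/2}$ together with $|l|\le|l|^2$ gives contributions bounded by $C(1+\|\omega_0\|^{2M})|l|^{2M}|t-s|^{M/2}$, consistent with the claim since $2\le 2\vee(2\alpha+1)$.

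The delicate term is $I_2$, and this is where I expect the real work. The naive estimate $|\langle g(\omega^n_r),\Delta e_l\rangle|\le(2\pi|l|)^2\|g(\omega^n_r)\|_{L^2}$ together with $|g(x)|\lesssim|x|+|x|^{2\alpha+1}$ (a consequence of \eqref{bound g}) and the 2D Gagliardo--Nirenberg inequality forces $\|g(\omega^n_r)\|_{L^2}\lesssim\|\omega_0\|(1+\|\nabla\omega^n_r\|^{2\alpha})$; a Hölder estimate in time then only yields $|t-s|^{(1-\alpha)M}$, whose exponent drops below $M/2$ precisely when $\alpha>1/2$, so this route does not reach $|t-s|^{M/2}$. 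The fix I would use is to test $g(\omega^n_r)$ against $e_l$ through the $L^\infty$--$L^1$ duality, i.e. $|\langle g(\omega^n_r),\Delta e_l\rangle|\le(2\pi|l|)^2\|e_l\|_{L^\infty}\|g(\omega^n_r)\|_{L^1}$. Bounding $\|g(\omega^n_r)\|_{L^1}\lesssim\|\omega^n_r\|_{L^1}+\|\omega^n_r\|_{L^{2\alpha+1}}^{2\alpha+1}$ and applying Gagliardo--Nirenberg to $\|\omega^n_r\|_{L^{2\alpha+1}}^{2\alpha+1}$ lowers the gradient power from $2\alpha$ to $2\alpha-1$, giving $\|g(\omega^n_r)\|_{L^1}\lesssim\|\omega_0\|+\|\omega_0\|^2\|\nabla\omega^n_r\|^{2\alpha-1}$ when $\alpha>1/2$ (and simply $\lesssim1+\|\omega_0\|^2$ when $\alpha\le1/2$, with no gradient). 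A Hölder estimate in time, using $\int_s^t\|\nabla\omega^n_r\|^2\,dr\lesssim\|\omega_0\|^2$, now produces $|t-s|^{(3-2\alpha)/2}$ with exponent $\ge1/2$, and simultaneously the $\|\omega_0\|$-power $2\alpha+1$, matching $2\vee(2\alpha+1)$; this yields $\mathbb{E}[|I_2|^M]\lesssim(1+\|\omega_0\|^{M(2\vee(2\alpha+1))})|l|^{2M}|t-s|^{M/2}$.

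Finally, for the martingale $I_4$ I would invoke the Burkholder--Davis--Gundy inequality, so that $\mathbb{E}[|I_4|^M]\lesssim\mathbb{E}[(\sum_k\theta_k^2\int_s^t\langle\sigma_k\cdot\nabla e_l,f(\omega^n_r)\rangle^2\,dr)^{M/2}]$. Using $\|\sigma_k\cdot\nabla e_l\|_{L^\infty}\lesssim|l|$ uniformly in $k$, the normalization $\sum_k\theta_k^2=1$ from \eqref{eq:theta}, and $\|f(\omega^n_r)\|_{L^1}\lesssim1+\|\omega_0\|^{\alpha+1}$ (from \eqref{bound f'} and $\|\omega^n_r\|_{L^{\alpha+1}}\le\|\omega^n_r\|_{L^2}$), the quadratic variation is bounded by $|l|^2(1+\|\omega_0\|^{2\alpha+2})|t-s|$, and BDG produces exactly the critical power $|t-s|^{M/2}$, with $\|\omega_0\|$-power $(\alpha+1)M\le M(2\vee(2\alpha+1))$. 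Summing the four contributions and absorbing powers of $T$ then yields the claim with a constant $C=C(M,\nu,T)$ independent of $n$ and $l$. The whole difficulty is concentrated in $I_2$: obtaining the $|t-s|^{M/2}$ decay of the Smagorinsky drift throughout the range $\alpha\in[0,1]$ hinges on the $L^1$ duality trick, which saves exactly one power of the gradient.
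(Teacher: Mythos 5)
Your proposal is correct and follows essentially the same route as the paper: the same four-term decomposition, pathwise bounds for $I_1$ and $I_3$, the $L^1$--$L^\infty$ duality combined with Sobolev embedding and interpolation to reduce the gradient power to $2\alpha-1$ in the Smagorinsky term (yielding $|t-s|^{M(3/2-\alpha)}$), and Burkholder--Davis--Gundy with $\sum_k\theta_k^2=1$ for the martingale term. The exponents of $\lVert\omega_0\rVert$ and $|t-s|$ you obtain for each piece match the paper's exactly.
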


\begin{proof}
    It is enough to consider $\lvert l \rvert\leq n $. From the weak formulation satisfied by $\omega^n$ it follows that
    \begin{align*}
         \langle \omega^n_t-\omega^n_s,e_l\rangle&= \nu \int_s^t \langle \omega^n_r,\Delta e_l\rangle \, dr + \int_s^t \langle g(\omega^n_r), \Delta e_l\rangle\, dr\\
         &\quad +\int_s^t \langle K[\omega^n_r]\cdot\nabla e_l,\omega^n_r\rangle\, dr + \sum_{k} \theta_k \langle \sigma_k\cdot \nabla e_l,f(\omega^n_r) \rangle\, dW^k_r\\
         & =I^1_{s,t}+I^2_{s,t}+I^3_{s,t}+I^4_{s,t}.
    \end{align*}
    The analysis of $I^1_{s,t}$ and $I^3_{s,t}$ follows arguing exactly as in \cite[Lemma 3.4]{flandoli2021scaling} and leads us to
    \begin{align*}
\mathbb{E}\left[(I^1_{s,t})^M\right]+\mathbb{E}\left[(I^3_{s,t})^M\right]\lesssim \lVert \omega_0\rVert^{M}\lvert l\rvert^{2M} \lvert t-s\rvert^M+\lVert \omega_0\rVert^{2M}\lvert l\rvert^M \lvert t-s\rvert^M.
    \end{align*}
For what concerns $I_{s,t}^2$ with $\alpha\in [1/2,1]$ (the case $\alpha\in [0,1/2]$ being easier), we have by H\"older's inequality and relation \eqref{bound g} that
  $$\aligned
  \mathbb{E}\left[(I^2_{s,t})^M\right]
  &\lesssim \mathbb{E}\left[\left \lvert \int_s^t \langle  g(\omega^n_r), \Delta e_l\rangle\, dr\right\rvert^M\right]\\
  & \leq \mathbb{E}\left[\left \lvert \int_s^t \lVert g(\omega^n_r)\rVert_{L^1} \lVert \Delta e_l\rVert_{L^{\infty}} \, dr\right\rvert^M\right]\\
  & \lesssim \lvert l\rvert^{2M}\, \mathbb{E}\left[\left \lvert \int_s^t \lVert 1+\lvert\omega^n_r \rvert^{2\alpha+1}\rVert_{L^1}\, dr\right\rvert^M\right]\\
  & \lesssim \lvert l\rvert^{2M}\left(\lvert t-s\rvert^M +\mathbb{E}\left[\left \lvert \int_s^t \lVert \omega^n_r\rVert_{L^{2\alpha+1}}^{2\alpha+1}\, dr\right\rvert^M\right] \right).
  \endaligned$$
Next, by Sobolev embedding theorem and interpolation inequalities,
\begin{align*}
    \mathbb{E}\left[(I^2_{s,t})^M\right]
    & \lesssim \lvert l\rvert^{2M}\left(\lvert t-s\rvert^M +\mathbb{E}\left[\left \lvert \int_s^t \lVert \omega^n_r\rVert_{H^{\frac{2\alpha-1}{2\alpha+1}}}^{2\alpha+1} dr\right\rvert^M\right]\right)\\
    & \leq \lvert l\rvert^{2M}\left(\lvert t-s\rvert^M +\mathbb{E}\left[\left \lvert \int_s^t \lVert \nabla\omega^n_r\rVert^{2\alpha-1} \lVert \omega_r^n\rVert^2 dr\right\rvert^M\right]\right),
\end{align*}
which, combined the estimates in Lemma \ref{a priori estimate Galerkin}, yields
\begin{align*}
    \mathbb{E}\left[(I^2_{s,t})^M\right] & \leq \lvert l\rvert^{2M}\left(\lvert t-s\rvert^M +\lvert t-s\rvert ^{M(\frac32-\alpha)}\lVert \omega_0\rVert^{2M}\mathbb{E}\left[\left \lvert \int_s^t \lVert \nabla\omega^n_r\rVert^{2} dr\right\rvert^{M(\alpha-\frac12)} \right]\right)\\
    & \lesssim \lvert l\rvert^{2M}\left(\lvert t-s\rvert^M +\lvert t-s\rvert ^{M(\frac32-\alpha)} \lVert \omega_0\rVert^{M(2\alpha+1)}\right)\\
    & \lesssim \lvert l \rvert^{2M}(1+\lVert \omega_0\rVert^{M(2\alpha+1)})\lvert t-s\rvert^{M\left(1\wedge (\frac32- \alpha) \right)}.
\end{align*}

Lastly we need to deal with $I_{s,t}^4$. Recall that $\theta\in \ell^2(\mathbb Z^2_0)$ fulfills $\|\theta \|_{\ell^2}=1$, and $\|\sigma_k\|_{L^\infty} =\sqrt{2}$; by Burkholder-Davis-Gundy inequality and estimate \eqref{bound f'},
\begin{align*}
    \mathbb{E}\left[(I^4_{s,t})^M\right]
    & \lesssim \mathbb{E}\left[\left\lvert \sum_{k} \theta_k^2 \int_s^t\langle \sigma_k\cdot\nabla e_l,f(\omega^n_r) \rangle^2\, dr\right\rvert^{M/2}\right]\\
    & \lesssim \mathbb{E}\left[\left\lvert \int_s^t\lVert f(\omega^n_r) \nabla e_l \rVert_{L^1}^2 \, dr\right\rvert^{M/2}\right]\\
    & \lesssim \lvert l\rvert^M\mathbb{E}\left[\left\lvert \int_s^t\lVert 1+\lvert \omega^n_r\rvert^{\alpha+1} \rVert_{L^1}^2\, dr\right\rvert^{M/2}\right] .
\end{align*}
Then, similarly as for the treatment of $I^2_{s,t}$, by Sobolev embedding theorem, interpolation inequalities and Lemma \ref{a priori estimate Galerkin} we have
\begin{align*}
    \mathbb{E}\left[(I^4_{s,t})^M\right]
    & \lesssim \lvert l\rvert^M\left(\lvert t-s\rvert^{M/2}+\mathbb{E}\left[\left\lvert \int_s^t\lVert \omega^n_r \rVert_{L^{\alpha+1}}^{2(\alpha+1)}\, dr\right\rvert^{M/2}\right]\right)\\
    & \lesssim \lvert l\rvert^M\left(\lvert t-s\rvert^{M/2}+\mathbb{E}\left[\left\lvert \int_s^t\lVert \omega^n_r \rVert^{2(\alpha+1)}\, dr\right\rvert^{M/2}\right]\right)\\
    & \lesssim \lvert l\rvert^M\lvert t-s\rvert^{M/2}\left(1+\lVert \omega_0\rVert^{M(\alpha+1)}\right).
\end{align*}
Combining the estimates the thesis follows.
\end{proof}

By Theorem \ref{compactness 1}, a set bounded in $L^2(0,T;H^{1})\cap W^{s,r}(0,T;H^{-\gamma})$ is relatively compact in $L^2(0,T;H^{1-\delta})$ for each $\delta>0$ if $s>0, \gamma>0, r\geq 2.$ On the other side, given $\delta>0$, if $p>\frac{r_1}{\delta(s_1r_1-1)(\beta-\delta)}$, a set bounded in $L^p(0,T;L^2)\cap W^{s_1,r_1}(0,T; H^{-\beta})$ with $s_1r_1>1$ is relatively compact in $C(0,T;H^{-\delta})$. Since by Lemma \ref{a priori estimate Galerkin} we can take $p$ arbitrarily large, it is enough to show the boundedness of $\{\omega^n \}_n$ in $W^{s_1,r_1}(0,T; H^{-\beta})$ for some $\beta.$ This is guaranteed by the lemma below.

\begin{lemma}\label{Lemma tightness galerkin}
    If $\beta>3+\frac{2}{r_1}, \ s_1<\frac{1}{2}, \ s_1r_1>1$  there exists a constant $C$ independent of $n$ such that
    \begin{align*}
        \mathbb{E}\left[\int_0^T\lVert \nabla\omega^n_s\rVert^2 ds \right]+\mathbb{E}\left[\int_0^T\lVert \omega^n_s\rVert^p ds \right]+\mathbb{E}\left[\int_0^T dt \int_0^T ds \frac{\lVert \omega^n_t-\omega^n_s\rVert_{H^{-\beta}}^{r_1}}{|t-s|^{1+r_1s_1}}\right]\leq C.
    \end{align*}
\end{lemma}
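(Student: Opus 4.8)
The plan is to bound the three terms separately; the first two are immediate and only the third requires real work. The a priori estimate of Lemma \ref{a priori estimate Galerkin} holds $\mathbb{P}$-a.s. and uniformly in $n$: in particular $2\nu\int_0^T\|\nabla\omega^n_s\|^2\,ds\le\|\omega_0\|^2$ and $\sup_{s\in[0,T]}\|\omega^n_s\|\le\|\omega_0\|$. Taking expectations gives $\mathbb{E}[\int_0^T\|\nabla\omega^n_s\|^2\,ds]\le\frac{1}{2\nu}\|\omega_0\|^2$ and $\mathbb{E}[\int_0^T\|\omega^n_s\|^p\,ds]\le T\|\omega_0\|^p$, both independent of $n$.

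For the time-regularity term I would first apply Fubini's theorem to exchange the expectation with the double time integral, reducing matters to the pointwise-in-$(s,t)$ estimate
\[
\mathbb{E}\big[\|\omega^n_t-\omega^n_s\|_{H^{-\beta}}^{r_1}\big]\lesssim\big(1+\|\omega_0\|^{q}\big)\,|t-s|^{r_1/2},
\]
uniformly in $n$, for a suitable exponent $q$ (which will be $2\vee(2\alpha+1)$, inherited from Lemma \ref{solution vs eigenfunctions}). Writing the norm in the Stokes eigenbasis, $\|v\|_{H^{-\beta}}^2=\sum_{l}(1+|l|^2)^{-\beta}\langle v,e_l\rangle^2$, this is exactly the quantity whose individual modes $\langle\omega^n_t-\omega^n_s,e_l\rangle$ are controlled by Lemma \ref{solution vs eigenfunctions}.

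The crux is passing from the modewise estimate to the moment of the full norm. Since $r_1\ge 2$ and hence $r_1/2\ge 1$, I would interchange the $L^{r_1/2}(\Omega)$ norm with the sum over $l$ by Minkowski's inequality, obtaining
\[
\mathbb{E}\big[\|\omega^n_t-\omega^n_s\|_{H^{-\beta}}^{r_1}\big]\le\Big(\sum_{l}(1+|l|^2)^{-\beta}\,\mathbb{E}\big[|\langle\omega^n_t-\omega^n_s,e_l\rangle|^{r_1}\big]^{2/r_1}\Big)^{r_1/2}.
\]
Applying Lemma \ref{solution vs eigenfunctions} with an even integer $M\ge r_1$, together with Lyapunov's inequality to descend to the (possibly non-integer) power $r_1$, bounds each factor by $C(1+\|\omega_0\|^{q})\,|l|^{4}\,|t-s|$. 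The residual series $\sum_{l\in\mathbb{Z}^2_0}(1+|l|^2)^{-\beta}|l|^{4}\sim\sum_{l}|l|^{4-2\beta}$ converges in dimension two precisely when $2\beta-4>2$, i.e. $\beta>3$; the hypothesis $\beta>3+\tfrac{2}{r_1}$ is (more than) enough, and delivers the displayed $|t-s|^{r_1/2}$ bound.

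Substituting this into the double integral leaves $\int_0^T\!\int_0^T|t-s|^{r_1/2-1-r_1 s_1}\,ds\,dt$, whose integrand is integrable near the diagonal iff the exponent is $>-1$, i.e. iff $s_1<\tfrac12$, as assumed; the resulting bound is independent of $n$. I expect the main obstacle to be the middle step: justifying the interchange of the $L^{r_1/2}(\Omega)$ moment with the infinite Fourier sum (for possibly non-integer $r_1$) and keeping careful track of the powers of $|l|$ so that the surviving series converges under the decay assumption on $\beta$. The a priori bounds and the final scalar time integral are routine.
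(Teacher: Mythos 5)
Your argument is correct and follows the same overall architecture as the paper's proof: the first two terms are absorbed by the pathwise bound of Lemma \ref{a priori estimate Galerkin}, Fubini reduces the third to the pointwise-in-$(s,t)$ moment bound $\mathbb{E}\big[\lVert \omega^n_t-\omega^n_s\rVert_{H^{-\beta}}^{r_1}\big]\lesssim |t-s|^{r_1/2}$, this is deduced from the modewise estimates of Lemma \ref{solution vs eigenfunctions}, and the final scalar time integral converges exactly when $s_1<\tfrac12$. The one step where you genuinely diverge is the passage from individual modes to the moment of the full $H^{-\beta}$ norm: you interchange the $L^{r_1/2}(\Omega)$ norm with the Fourier sum via Minkowski's integral inequality (legitimate, since $s_1<\tfrac12$ together with $s_1r_1>1$ forces $r_1>2$), whereas the paper splits the weight $|l|^{-2\beta}$ into two factors and applies H\"older's inequality with exponents $r_1/(r_1-2)$ and $r_1/2$, paying an extra factor $\big(\sum_l |l|^{-2(1+\epsilon)}\big)^{(r_1-2)/2}$. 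Both devices serve the same purpose; yours is slightly cleaner and shows that the residual series $\sum_l |l|^{4-2\beta}$ already converges for $\beta>3$, so the hypothesis $\beta>3+\tfrac{2}{r_1}$ is more than what your bookkeeping needs (it is what the paper's version requires). Your added care in invoking Lemma \ref{solution vs eigenfunctions} with an even integer $M\ge r_1$ and descending to the power $r_1$ by Lyapunov's inequality is also welcome, since that lemma as stated controls $\langle\cdot\rangle^M$ only for integer $M$.
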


\begin{proof}
    Thanks to Lemma \ref{a priori estimate Galerkin} we need just to consider \begin{align*}
        \mathbb{E}\left[\int_0^T dt \int_0^T ds \frac{\lVert \omega^n_t-\omega^n_s\rVert_{H^{-\beta}}^{r_1}}{|t-s|^{1+r_1s_1}}\right].
    \end{align*}
    By Fubini theorem it follows that
    \begin{align*}
         \mathbb{E}\left[\int_0^T dt \int_0^T ds \frac{\lVert \omega^n_t-\omega^n_s\rVert_{H^{-\beta}}^{r_1}}{|t-s|^{1+r_1s_1}}\right]& = \int_0^T dt \int_0^T ds \frac{\mathbb{E}\left[\lVert \omega^n_t-\omega^n_s\rVert_{H^{-\beta}}^{r_1}\right]}{|t-s|^{1+r_1s_1}}.
    \end{align*}
    Let us understand better $\mathbb{E}\left[\lVert \omega^n_t-\omega^n_s\rVert_{H^{-\beta}}^{r_1}\right]$: by the definition of Sobolev norms and H\"older's inequality,
    \begin{align*}
        \mathbb{E}\left[\lVert \omega^n_t-\omega^n_s\rVert_{H^{-\beta}}^{r_1}\right]
        &=\mathbb{E}\left[\left(\sum_{l\in\mathbb{Z}^2_0}\frac{\langle \omega^n_t-\omega^n_s, e_l \rangle^2}{\lvert l\rvert^{2\beta}}\right)^{r_1/2}\right]\\
        & =\mathbb{E}\left[\left(\sum_{l\in\mathbb{Z}^2_0}\frac{\langle \omega^n_t-\omega^n_s, e_l \rangle^2}{\lvert l\rvert^{2(\beta-\frac{(1+\epsilon)(r_1-2)}{r_1})}\lvert l\rvert^{2(\frac{(1+\epsilon)(r_1-2)}{r_1})}} \right)^{r_1/2}\right]\\
        & \leq \left(\sum_{l\in\mathbb{Z}^2_0}\frac{1}{\lvert l\rvert^{2(1+\epsilon)}}\right)^{(r_1-2)/2}\sum_{l\in\mathbb{Z}^2_0}\mathbb{E}\left[\frac{\langle \omega^n_t-\omega^n_s, e_l \rangle^{r_1}}{\lvert l\rvert^{\beta r_1-(1+\epsilon)(r_1-2)}}\right] .
    \end{align*}
    Thanks to Lemma \ref{solution vs eigenfunctions}, we have
    \begin{align*}
        \mathbb{E}\left[\lVert \omega^n_t-\omega^n_s\rVert_{H^{-\beta}}^{r_1}\right]
        & \lesssim \sum_{l\in\mathbb{Z}^2_0}\frac{\left(1+\lVert \omega_0\rVert^{r_1(2\vee (2\alpha+1))}\right)\lvert l\rvert^{2r_1}\lvert t-s\rvert^{r_1/2}}{\lvert l\rvert^{\beta r_1-(1+\epsilon)(r_1-2)}}\\
        & \lesssim \lvert t-s\rvert^{r_1/2} \sum_{l\in\mathbb{Z}^2_0}\frac{1}{\lvert l\rvert^{r_1(\beta-3)}}\\
        & \lesssim  \lvert t-s\rvert^{r_1/2}.
    \end{align*}
    Therefore \begin{align*}
        \mathbb{E}\left[\int_0^T dt \int_0^T ds \frac{\lVert \omega^n_t-\omega^n_s\rVert_{H^{-\beta}}^{r_1}}{|t-s|^{1+r_1s_1}}\right]&\lesssim \int_0^T dt \int_0^T ds \frac{1}{\lvert t-s\rvert^{1+r_1(s_1-1/2)}} \lesssim 1.
    \end{align*}
    The proof is complete.
\end{proof}

Combining Lemma \ref{Lemma tightness galerkin} with Theorems \ref{compactness 1}, \ref{compactness 2} we have the following tightness result by Markov's inequality:

\begin{corollary}
The family of laws of $\omega^n$ is tight on $C([0,T];H^-)\cap L^2(0,T;H^{1-}).$
\end{corollary}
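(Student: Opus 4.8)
The plan is to combine the uniform a priori bounds with the deterministic compactness criteria of Theorems \ref{compactness 1} and \ref{compactness 2}, converting them into tightness through Markov's inequality. Fix exponents $\beta,s_1,r_1$ satisfying the hypotheses $\beta>3+2/r_1$, $s_1<1/2$, $s_1r_1>1$ of Lemma \ref{Lemma tightness galerkin} (note $s_1r_1>1$ and $s_1<1/2$ force $r_1>2$). Two uniform bounds are then available. Lemma \ref{a priori estimate Galerkin} gives the \emph{pathwise} (hence deterministic) bound
\[ \sup_n\Big(\|\omega^n\|_{L^\infty(0,T;L^2)}^2+\|\omega^n\|_{L^2(0,T;H^1)}^2\Big)\le C_0\quad \mathbb{P}\text{-a.s.}, \]
with $C_0<\infty$ depending only on $\nu,T,\|\omega_0\|$, while Lemma \ref{Lemma tightness galerkin} yields $\sup_n\mathbb{E}\big[[\omega^n]_{W^{s_1,r_1}(0,T;H^{-\beta})}^{r_1}\big]\le C_1<\infty$, where $[\,\cdot\,]_{W^{s_1,r_1}}$ denotes the Gagliardo--Slobodeckij seminorm whose $r_1$-th power is the double integral in that lemma.

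For $R>0$ introduce the deterministic set
\[ K_R:=\Big\{\omega:\ \|\omega\|_{L^\infty(0,T;L^2)}^2+\|\omega\|_{L^2(0,T;H^1)}^2\le C_0,\ \ [\omega]_{W^{s_1,r_1}(0,T;H^{-\beta})}^{r_1}\le R\Big\}. \]
I claim its closure is compact in the target space. Theorem \ref{compactness 1}, applied with $X=H^1\stackrel{c}{\hookrightarrow}B=H^{1-\delta}\hookrightarrow Y=H^{-\beta}$ and $p=2$ (the hypothesis $s_1>0$ holds since $r_1>2=p$ gives the regime $r\ge p$), shows $K_R$ is relatively compact in $L^2(0,T;H^{1-\delta})$ for every $\delta>0$. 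Theorem \ref{compactness 2}, applied with $X=L^2$, $s_0=0$, $r_0=+\infty$ and $Y=H^{-\beta}$, $B=H^{-\delta}$ via the interpolation inequality with $\theta=\delta/\beta$, gives $s_\ast=\tfrac{\delta}{\beta}\cdot\tfrac{s_1r_1-1}{r_1}>0$, hence $K_R$ is relatively compact in $C([0,T];H^{-\delta})$ for every $\delta>0$. The choice $r_0=+\infty$, legitimised by the pathwise $L^\infty(0,T;L^2)$ bound, is what makes the $C$-compactness hold simultaneously for all $\delta>0$ with one and the same set $K_R$.

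The main (and only genuinely nonroutine) step is to upgrade these scale-by-scale statements into relative compactness in the projective-limit spaces $C([0,T];H^-)=C([0,T];\cap_{r<0}H^r)$ and $L^2(0,T;H^{1-})=L^2(0,T;\cap_{r<1}H^r)$. Since $K_R$ is fixed, independent of $\delta$, given any sequence in $K_R$ I fix an exhausting sequence $\delta_m\downarrow 0$ and extract, by the two compactness results above, a subsequence converging in $C([0,T];H^{-\delta_1})\cap L^2(0,T;H^{1-\delta_1})$, then a nested subsequence for $\delta_2$, and so on; the diagonal subsequence converges in $C([0,T];H^{-\delta_m})\cap L^2(0,T;H^{1-\delta_m})$ for every $m$, i.e. in the intersection topology. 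Thus $\overline{K_R}$ is sequentially compact, hence compact, in $C([0,T];H^-)\cap L^2(0,T;H^{1-})$.

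Finally, by Markov's inequality and the two a priori bounds, $\sup_n\mathbb{P}\big(\omega^n\notin K_R\big)=\sup_n\mathbb{P}\big([\omega^n]_{W^{s_1,r_1}(0,T;H^{-\beta})}^{r_1}>R\big)\le C_1/R$. Given $\varepsilon>0$, taking $R=C_1/\varepsilon$ produces a compact set $\overline{K_R}$ with $\sup_n\mathbb{P}(\omega^n\in\overline{K_R})\ge 1-\varepsilon$, which is precisely tightness of the laws of $\omega^n$ on $C([0,T];H^-)\cap L^2(0,T;H^{1-})$.
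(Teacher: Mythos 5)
Your argument is correct and follows essentially the same route as the paper: the pathwise bounds of Lemma \ref{a priori estimate Galerkin} together with the $W^{s_1,r_1}(0,T;H^{-\beta})$ moment bound of Lemma \ref{Lemma tightness galerkin} are fed into Theorems \ref{compactness 1} and \ref{compactness 2}, and tightness follows by Markov's inequality (the paper uses $L^p(0,T;L^2)$ with $p$ arbitrarily large where you use $r_0=\infty$, which is immaterial). Your explicit diagonal extraction for the projective-limit spaces $H^-$ and $H^{1-}$ is a welcome clarification of a step the paper leaves implicit; only note the harmless slip that the final bound should read $\inf_n\mathbb{P}(\omega^n\in\overline{K_R})\ge 1-\varepsilon$.
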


\subsection{Passage to the limit}\label{passage to the limit Galerkin}

Arguing as in \cite{flandoli2021scaling}, by Skorohod's representation Theorem, we can find, up to passing to subsequences, an auxiliary probability space, that for simplicity we continue to call $(\Omega,\mathcal{F},\mathbb{P})$, and processes $(\Tilde{\omega}^n,W^n:=\{W^{n,k}\}_{k\in \mathbb{Z}^2_0}),\  ({\omega},W:=\{W^{k}\}_{k\in \mathbb{Z}^2_0}),  $ such that \begin{align*}
    &\Tilde{\omega}^n\rightarrow \omega \quad \text{in } C([0,T];H^-)\cap L^2(0,T;H^{1-}) \quad \mathbb{P}-a.s.\\
   & W^n\rightarrow W \quad \text{in } C([0,T];\mathbb{Z}^2_0)\quad \mathbb{P}-a.s.
\end{align*}
Of course the convergence above between $W^n$ and $W$ can be seen as the uniform convergence of cylindrical Wiener processes $W^n=\sum_{k\in \mathbb{Z}^2_0}e_k W^{n,k},\ W=\sum_{k\in \mathbb{Z}^2_0}e_k W^{k}$ on a suitable Hilbert space $U_0$. Before going on, in order to identify $\omega$ as a weak solution of equation \eqref{Ito equation} we need further integrability properties of $\omega$. The proof of the proposition below is analogous to Lemma 3.5 in \cite{flandoli2021scaling}, therefore we will omit the details in these notes.

\begin{proposition}\label{further estimates solution}
The process $\omega$ has weakly continuous trajectories on $L^2(\mathbb{T}^2)$ and satisfies
\begin{align*}
    \operatorname{sup}_{t\in [0,T]}\lVert \omega_t\rVert^2\leq \lVert \omega_0\rVert^2 \quad\mathbb{P}-a.s.\\
    2\nu \int_0^T \lVert \nabla \omega_s\rVert^2 ds \leq \lVert \omega_0\rVert^2 \quad\mathbb{P}-a.s.
\end{align*}
\end{proposition}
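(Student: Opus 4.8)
The plan is to transfer the uniform a priori bounds of Lemma \ref{a priori estimate Galerkin} to the limit process $\omega$, exploiting the $\mathbb{P}$-a.s.\ convergence $\tilde\omega^n\to\omega$ in $C([0,T];H^-)\cap L^2(0,T;H^{1-})$ delivered by Skorohod's theorem together with the weak lower semicontinuity of Hilbert norms. Since $\tilde\omega^n$ and $\omega^n$ have the same law, estimate \eqref{a priori 1} transfers verbatim to $\tilde\omega^n$, so that $\mathbb{P}$-a.s.\ and for every $n$,
\begin{align*}
\sup_{t\in[0,T]}\lVert \tilde\omega^n_t\rVert^2 \leq \lVert \omega_0\rVert^2, \qquad 2\nu\int_0^T \lVert \nabla\tilde\omega^n_s\rVert^2\, ds \leq \lVert \omega_0\rVert^2 .
\end{align*}
I would fix once and for all a set $\Omega_0$ of full measure on which both bounds hold for all $n$ and the convergence in $C([0,T];H^-)\cap L^2(0,T;H^{1-})$ takes place, and then argue pathwise on $\Omega_0$.

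For the dissipation bound, along a fixed path in $\Omega_0$ the sequence $\{\tilde\omega^n\}_n$ is bounded in the Hilbert space $L^2(0,T;H^1)$, hence admits a weakly convergent subsequence; the strong convergence in the weaker space $L^2(0,T;H^{1-})$ forces its weak limit to be $\omega$. Since $v\mapsto \int_0^T\lVert \nabla v_s\rVert^2\, ds$ is convex and continuous, hence weakly lower semicontinuous, on $L^2(0,T;H^1)$, I obtain $2\nu\int_0^T\lVert\nabla\omega_s\rVert^2\, ds\le \liminf_n 2\nu\int_0^T\lVert\nabla\tilde\omega^n_s\rVert^2\, ds\le \lVert\omega_0\rVert^2$, which is the second claimed inequality.

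For the pointwise bound, fix $t\in[0,T]$ and a path in $\Omega_0$: from convergence in $C([0,T];H^-)$ one has $\tilde\omega^n_t\to\omega_t$ in $H^{-\varepsilon}$ for each $\varepsilon>0$, while $\lVert \tilde\omega^n_t\rVert\le\lVert\omega_0\rVert$; weak compactness in $L^2$ and uniqueness of the limit then give $\lVert\omega_t\rVert\le\lVert\omega_0\rVert$. In particular $\omega\in L^\infty(0,T;L^2)$ with norm at most $\lVert\omega_0\rVert$. Being essentially bounded in the reflexive space $L^2$ and continuous with values in the larger space $H^{-\varepsilon}$, the trajectory $\omega$ is weakly continuous in $L^2$ by the classical Strauss-type lemma, which is the asserted weak continuity. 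Finally, running the pointwise estimate over a countable dense set of times and combining weak continuity with the weak lower semicontinuity of $\lVert\cdot\rVert$ promotes it to $\sup_{t\in[0,T]}\lVert\omega_t\rVert^2\le\lVert\omega_0\rVert^2$ on $\Omega_0$.

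The step I expect to require the most care is this last promotion: the pointwise-in-$t$ estimate comes with an exceptional null set that a priori depends on $t$, so one cannot directly take a supremum over uncountably many times. The remedy, and the reason the two assertions of the proposition are naturally proved together, is to first secure weak continuity from the essential bound and then extend the estimate from a countable dense set to every $t$; all the remaining steps are routine applications of weak compactness and lower semicontinuity.
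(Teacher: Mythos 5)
Your argument is correct and is exactly the standard Skorohod-transfer plus lower-semicontinuity argument that the paper has in mind: the paper omits this proof, referring to Lemma 3.5 of \cite{flandoli2021scaling}, which proceeds in the same way (equality of laws to carry the Galerkin bounds over to $\tilde\omega^n$, weak lower semicontinuity of the $L^2(0,T;H^1)$ and $L^2$ norms along the a.s.\ convergence, and the classical lemma giving weak $L^2$-continuity from $L^\infty(0,T;L^2)\cap C([0,T];H^{-\varepsilon})$). One small remark: your closing worry about a $t$-dependent null set is unfounded, because on the fixed full-measure set $\Omega_0$ the convergence in $C([0,T];H^-)$ is uniform in $t$, so the pointwise bound $\lVert\omega_t\rVert\le\lVert\omega_0\rVert$ already holds for every $t$ simultaneously and the detour through a countable dense set of times is unnecessary.
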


Now we are ready to prove Theorem \ref{thm well posed}.

\begin{proof}[Proof of Theorem \ref{thm well posed}]
Let $\phi\in \Pi^M(L^2(\mathbb{T}^2))$, by classical arguments for each $n\geq M$, $\Tilde{\omega}^n$ satisfies the following weak formulation: $\mathbb{P}$-a.s. for all $t\in [0,T]$,
\begin{align*}
    \langle \Tilde{\omega}^n_t-\omega^n_0,\phi\rangle
    &= \nu \int_0^t \langle \Tilde{\omega}^n_s,\Delta \phi\rangle\, ds+ \int_0^t \langle  g(\Tilde{\omega}^n_s), \Delta\phi\rangle\, ds\\
    &\quad +\int_0^t \langle \Tilde{\omega}^n_s, K[\tilde{\omega}^n_s]\cdot\nabla\phi \rangle\, ds + \sum_{k\in \mathbb{Z}^2_0}\int_0^t \theta_k \langle f(\Tilde{\omega}^n_s), \sigma_k\cdot\nabla \phi \rangle\, dW^{n,k}_s .
\end{align*}
    Therefore we will show, up to passing to a further subsequence, $\mathbb{P}$-a.s. convergence of all the terms appearing above, uniformly in time. Indeed,
    \begin{align}\label{convergence time t}
        \operatorname{sup}_{t\in [0,T]}\lvert \langle\Tilde{\omega}^n_t-\omega_t,\phi\rangle\rvert\leq \lVert \Tilde{\omega}^n-\omega\rVert_{C([0,T];H^-)}\lVert \phi\rVert_{H^1}\rightarrow 0 \quad \mathbb{P} \mbox{-a.s.}
    \end{align}
    and similarly for the initial conditions. Next,
    \begin{align}\label{convergence laplacian}
    \operatorname{sup}_{t\in [0,T]}\left\lvert \int_0^t \langle \Tilde{ \omega}^n_s-\omega_s,\Delta \phi\rangle \, ds\right\rvert & \leq \lVert \phi \rVert_{H^2}\int_0^T\lVert \omega_s-\Tilde{\omega}^n_s\rVert \, ds\rightarrow 0\quad
     \mathbb{P} \mbox{-a.s.}
    \end{align}
    due to the almost surely convergence in $L^2(0,T;H^{1-})$. Moreover,
    \begin{align}\label{convergence nonlinear}
        &\operatorname{sup}_{t\in [0,T]}\left\lvert \int_0^t \langle \Tilde{ \omega}^n_s, K[\tilde{\omega}^n_s]\cdot\nabla \phi\rangle -\langle \omega_s, K[\omega_s]\cdot\nabla \phi\rangle\, ds\right\rvert\notag\\
        & \leq \int_0^T \lvert\langle \Tilde{ \omega}^n_s, (K[\tilde{\omega}^n_s]-K[\omega_s])\cdot\nabla \phi\rangle\rvert\, ds +\int_0^T \lvert\langle \Tilde{ \omega}^n_s-\omega_s, K[\omega_s]\cdot\nabla \phi\rangle\rvert \,ds\notag\\
        & \lesssim \lVert \phi\rVert_{W^{1,\infty}}\lVert \omega_0\rVert\int_0^T \lVert \Tilde{\omega}^n_s-\omega_s\rVert ds\rightarrow 0 \quad \mathbb{P} \mbox{-a.s.}
    \end{align}
    due to the almost surely convergence in $L^2(0,T;H^{1-})$. Thanks to relation \eqref{bound g} it follows that
    \begin{align}\label{convergence smagorinsky diffusion step 1}
         &\operatorname{sup}_{t\in [0,T]}\left\lvert \int_0^t \langle g(\Tilde{\omega}^n_s)-g(\omega_s), \Delta \phi\rangle \, ds\right\rvert\notag\\
         & \leq \lVert \Delta \phi\rVert_{L^{\infty}}\int_0^T  \left( \left\lVert \left\lvert \tilde{\omega}^n_s-\omega_s\right\rvert \big(1+ |\omega_s|^{2\alpha} \big) \right\rVert_{L^1} +\left\lVert \left\lvert \tilde{\omega}^n_s\right\rvert \big| | \tilde{\omega}^n_s|^{2\alpha}- | \omega_s|^{2\alpha}\big| \right\rVert_{L^1}\right) ds \notag \\
         & =  \lVert \phi\rVert_{W^{2,\infty}}(I_1+I_2),
    \end{align}
    where \begin{align*}
        I_1&=\int_0^T  \left\lVert \left\lvert \tilde{\omega}^n_s-\omega_s\right\rvert \big(1+ |\omega_s|^{2\alpha} \big) \right\rVert_{L^1}\, ds, \\
        I_2&= \int_0^T  \big\lVert \left\lvert \tilde{\omega}^n_s\right\rvert  \left( \left\lvert\tilde{\omega}^n_s\right\rvert^{\alpha}-\left\lvert \omega_s\right\rvert^{\alpha} \right)\left( \left\lvert\tilde{\omega}^n_s\right\rvert^{\alpha}+\left\lvert \omega_s\right\rvert^{\alpha} \right) \big\rVert_{L^1}\, ds.
    \end{align*}
    Let us show that, $\mathbb{P}$-a.s., both $I_1$ and $I_2$ tend to $0$.
    We can control $I_1$ thanks to H\"older inequality, Sobolev embedding theorem, interpolation inequalities,
    \begin{align}
        I_1 & \leq\int_0^T \lVert \tilde{\omega}^n_s-\omega_s\rVert \left(1+\lVert \omega_s\rVert_{L^{4\alpha}}^{2\alpha}\right) ds \notag\\
        & \leq \int_0^T \lVert \tilde{\omega}^n_s-\omega_s\rVert \left(1+\lVert \omega_s\rVert_{H^{\frac{2\alpha-1}{2\alpha}}}^{2\alpha}\right) ds\notag\\
        & \leq \int_0^T \lVert \tilde{\omega}^n_s-\omega_s\rVert \left(1+\lVert \omega_s\rVert_{H^1}^{2\alpha-1}\lVert \omega_s\rVert\right) ds \notag .
    \end{align}
    By Lemma \ref{Lemma interpolation}, we have for $\alpha\in (1/2,1]$ (the other case being easier) that
    \begin{align}\label{convergence smagorinsky diffusion step 2}
        I_1 & \lesssim \lVert \tilde{\omega}^n-\omega\rVert_{L^2(0,T;L^2)}  +\lVert \omega_0\rVert\, \lVert \omega\rVert_{L^2(0,T;H^1)}^{2\alpha-1}\lVert \tilde{\omega}^n-\omega\rVert_{L^{\frac{2}{3-2\alpha}}(0,T;L^2)}\notag\\
        & \lesssim \lVert \tilde{\omega}^n-\omega\rVert_{L^2(0,T;L^2)}  +\lVert \omega_0\rVert^{2\alpha} \lVert \tilde{\omega}^n-\omega\rVert_{L^{\frac{2}{3-2\alpha}}(0,T;L^2)} \stackrel{\mathbb{P} \mbox{\footnotesize -a.s.}}{\longrightarrow } 0 .
    \end{align}
For what concerns $I_2$ similar arguments and the H\"olderianity of $x^{\alpha}$ lead to
\begin{align}
    I_2& \leq \int_0^T \lVert \tilde{\omega}^n_s\rVert \left\lVert \left\lvert\tilde{\omega}^n_s-\omega_s \right\rvert^{\alpha}\left(\left\lvert\tilde{\omega}^n_s\right\rvert^{\alpha}+\left\lvert \omega_s\right\rvert^{\alpha}\right)  \right\rVert \notag\\
    & \lesssim \lVert \omega_0\rVert \int_0^T \lVert  \tilde{\omega}^n_s-\omega_s \rVert_{L^{4\alpha}}^{\alpha}\left(\lVert  \tilde{\omega}^n_s \rVert_{L^{4\alpha}}^{\alpha}+\lVert  \omega_s \rVert_{L^{4\alpha}}^{\alpha}\right) ds \notag \\
    & \lesssim \lVert \omega_0\rVert \int_0^T \lVert  \tilde{\omega}^n_s-\omega_s \rVert_{H^{\frac{2\alpha-1}{2\alpha}}}^{\alpha}\left(\lVert  \tilde{\omega}^n_s \rVert_{H^{\frac{2\alpha-1}{2\alpha}}}^{\alpha}+\lVert  \omega_s \rVert_{H^{\frac{2\alpha-1}{2\alpha}}}^{\alpha}\right) ds \notag .
\end{align}
By interpolation and H\"older's inequality,
\begin{align}\label{convergence smagorinsky diffusion step 3}
    I_2 & \leq \lVert \omega_0\rVert \int_0^T \lVert  \tilde{\omega}^n_s-\omega_s \rVert_{H^{\frac{2\alpha-1}{2\alpha}}}^{\alpha}\left(\lVert  \tilde{\omega}^n_s \rVert_{H^{1}}^{\frac{2\alpha-1}{2}}\lVert \tilde{\omega}^n_s\rVert^{\frac{1}{2}} +\lVert  \omega_s \rVert_{H^{1}}^{\frac{2\alpha-1}{2}}\lVert \omega_s\rVert^{\frac{1}{2}}\right) ds \notag\\
    & \leq \lVert \omega_0\rVert^{3/2}\left(\lVert\tilde\omega^n\rVert_{L^2(0,T;H^1)}^{\frac{2\alpha-1}{2}}+\lVert \omega\rVert_{L^2(0,T;H^1)}^{\frac{2\alpha-1}{2}} \right) \lVert \tilde{\omega}^n_s-\omega_s\rVert_{L^{\frac{4\alpha}{5-2\alpha}}(0,T;H^{\frac{2\alpha-1}{2\alpha}})}^{\alpha}\notag\\
    & \lesssim \lVert \omega_0\rVert^{1+\alpha}\lVert \tilde{\omega}^n_s-\omega_s\rVert_{L^{\frac{4\alpha}{5-2\alpha}}(0,T;H^{\frac{2\alpha-1}{2\alpha}})}^{\alpha} \stackrel{\mathbb{P} \mbox{\footnotesize -a.s.}}{\longrightarrow } 0 .
\end{align}

    In order to deal with the stochastic integral we apply Lemma \ref{lemma Marco Mario}. Since we have the convergence of the Wiener processes, it is enough to show that $\mathbb{P}$-a.s., therefore in probability,
    \begin{align*}
        \int_0^T \sum_{k} \theta_k^2 \langle \sigma_k\cdot\nabla \phi, f(\Tilde{\omega}^n_s)-f(\omega_s)\rangle^2\, ds \rightarrow 0.
    \end{align*}
    The relation above is true, indeed, recall the facts that $\lVert \sigma_k\rVert_{L^{\infty}}=\sqrt{2}\ (\forall\, k\in \mathbb{Z}^2_0)$, $\sum_{k\in \mathbb{Z}^2_0}\theta_k^2=1$, and relation \eqref{bound f''} we have
    \begin{align}
        &\int_0^T \sum_{k} \theta_k^2 \langle \sigma_k\cdot\nabla \phi, f(\Tilde{\omega}^n_s)-f(\omega_s)\rangle^2 ds\notag\\
        & \leq \lVert \phi\rVert_{W^{1,\infty}}^2\! \int_0^T \sum_{k} \theta_k^2 \lVert \sigma_k\rVert_{L^{\infty}}^2\lVert f(\Tilde{\omega}^n_s)-f(\omega_s)\rVert_{L^1}^2 ds\notag\\
        & \lesssim  \lVert \phi\rVert_{W^{1,\infty}}^2\!  \int_0^T \big\lVert \lvert \Tilde{\omega}^n_s-\omega_s\rvert +  | \tilde{\omega}^n_s  \left\lvert \tilde{\omega}^n_s\right\rvert^{\alpha}- \omega_s |\omega_s |^{\alpha} | \big\rVert_{L^1}^2 ds\notag\\
        & \lesssim \lVert \phi\rVert_{W^{1,\infty}}^2\!  \left(\lVert \Tilde{\omega}^n-\omega\rVert_{L^2_t L^2_x} +\! \int_0^T\! \big\lVert |\tilde\omega^n_s-\omega_s|\, |\tilde\omega^n_s|^{\alpha}  \big\rVert_{L^1}^2 + \big\lVert |\omega_s| \, |\tilde\omega^n_s-\omega_s |^{\alpha} \big\rVert_{L^1}^2 ds\right) \notag \! ,
    \end{align}
    where $\|\cdot \|_{L^2_t L^2_x}$ is the norm in $L^2(0,T; L^2(\mathbb T^2))$. By Cauchy's inequality,
    \begin{align}
        &\int_0^T \sum_{k} \theta_k^2 \langle \sigma_k\cdot\nabla \phi, f(\Tilde{\omega}^n_s)-f(\omega_s)\rangle^2 ds\notag\\
        & \leq \lVert \phi\rVert_{W^{1,\infty}}^2\! \left(\lVert \Tilde{\omega}^n-\omega\rVert_{L^2_t L^2_x} +\! \int_0^T\!  \lVert \tilde\omega^n_s-\omega_s\rVert^{2}\lVert \tilde \omega_s^n\rVert_{L^{2\alpha}}^{2\alpha}+ \lVert \omega_s\rVert^{2}\lVert \tilde \omega_s^n-\omega_s\rVert_{L^{2\alpha}}^{2\alpha}  ds\right)\notag\\
        &  \leq \lVert \phi\rVert_{W^{1,\infty}}^2\! \left(\lVert \Tilde{\omega}^n-\omega\rVert_{L^2_t L^2_x}+\lVert \omega_0\rVert^{2\alpha}\lVert \Tilde{\omega}^n-\omega\rVert_{L^2_t L^2_x}^2+\lVert \omega_0\rVert^{2}\lVert \Tilde{\omega}^n-\omega\rVert_{L^2_t L^2_x}^{2\alpha}\right)\notag \\
        & \rightarrow 0 \quad \mathbb{P} \mbox{-a.s.}
    \end{align}
        Therefore by Lemma \ref{lemma Marco Mario}, up to passing to a subsequence, uniformly in time, \begin{align}\label{convergence martingale}
        \sum_{k}\int_0^t\theta_k \langle \sigma_k\cdot\nabla \phi,f(\Tilde{\omega}^n_s) \rangle\, dW^{n,k}_s \ \stackrel{\mathbb{P} \mbox{\footnotesize -a.s}}{\longrightarrow}\ \sum_{k}\int_0^t\theta_k \langle \sigma_k\cdot\nabla \phi,f(\omega_s) \rangle\, dW^{k}_s .
    \end{align}
    Combining relations \eqref{convergence time t}, \eqref{convergence laplacian}, \eqref{convergence nonlinear}, \eqref{convergence smagorinsky diffusion step 1}, \eqref{convergence smagorinsky diffusion step 2}, \eqref{convergence smagorinsky diffusion step 3}, \eqref{convergence martingale} we have, $\mathbb{P}$-a.s. for all $t\in [0,T]$,
    \begin{align}\label{final approximation galerkin}
    \langle \omega_t,\phi\rangle &=\langle \omega_0,\phi\rangle +
     \nu \int_0^t \langle \omega_s,\Delta \phi\rangle \, ds+\int_0^t \langle g(\omega_s), \Delta\phi\rangle\, ds\notag\\
    &\quad +\int_0^t \langle K[\omega_s]\cdot\nabla\phi,\omega_s\rangle\, ds + \int_0^t \sum_{k} \theta_k\langle \sigma_k\cdot\nabla \phi,f(\omega_s) \rangle\, dW^k_s .
    \end{align}
    By standard density argument we can find a zero measure set $N$ such that on its complementary relation \eqref{final approximation galerkin} holds for each $\phi\in C^{\infty}(\mathbb{T}^2)$.
\end{proof}

\section{Scaling limit}\label{sect scaling limit}

Let now $\{\theta^N \}_N$ be a sequence in $\ell^2(\mathbb Z^2_0)$, each satisfying the conditions \eqref{eq:theta} and moreover
  \begin{equation}\label{eq:theta-N-1}
  \lim_{N\rightarrow +\infty} \lVert \theta^N\rVert_{\ell^\infty}=0;
  \end{equation}
let  $\omega^N$ be an analytically weak martingale solution in the sense of Definition \ref{well posed def} of
\begin{equation}\label{Ito equation scaled}
    \begin{cases}
        d\omega^N=\big(\nu \Delta \omega^N- K[\omega^N]\cdot\nabla \omega^N +\Delta g(\omega^N) \big)\, dt+\sum_{k}\theta^{N}_k \sigma_k\cdot \nabla f(\omega^N) \,  dW^k, \\
        u^N=-\nabla^{\perp}(-\Delta)^{-1}\omega^N, \\
        \omega^N(0)=\omega_0
    \end{cases}
\end{equation}
satisfying
\begin{align*}
        \sup_{t\in [0,T]}\lVert \omega^N_t\rVert^2+2\nu\int_0^T \lVert \nabla \omega^N_s\rVert^2\, ds\leq 2\lVert \omega_0\rVert^2 \quad \mathbb{P} \mbox{-a.s.}
    \end{align*}
The existence of such solution for each $N\in\mathbb{N}$ is guaranteed by Theorem \ref{thm well posed} above. Of course the probability space and the Brownian motions depend from $N$, however with some abuse of notation, we do not stress this dependence. Arguing as in Section \ref{sec well posed} we will show the tightness of the law of $\omega^N$ in $C([0,T];H^-)\cap L^2(0,T;H^{1-})$. This will allow us to prove Theorem \ref{Scaling Limit Smagorinski} following the same ideas of Section \ref{sec well posed}.

\subsection{Tightness}
The way of showing the tightness is completely analogous to Section \ref{sec well posed} thanks to Proposition \ref{further estimates solution}. Therefore we just sketch the argument. We start with the lemma below.

\begin{lemma}\label{solution scaled vs eigenfunctions}
    For each $M\in \mathbb{N}$, there exists a constant $C$ independent of $N$ such that for any $s,t$ with $0\leq s\leq t\leq T$, it holds
    \begin{align*}
        \mathbb{E}\left[\langle \omega^N_t-\omega^N_s,e_l\rangle^M\right]& \leq C \left(1+\lVert \omega_0\rVert^{M(2\vee (2\alpha+1))}\right)\lvert l\rvert^{2M}\lvert t-s\rvert^{M/2}.
    \end{align*}
\end{lemma}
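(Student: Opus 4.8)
The plan is to transcribe the proof of Lemma \ref{solution vs eigenfunctions} almost verbatim, the only novelty being the verification that every constant produced is independent of $N$. Testing the weak formulation associated to \eqref{Ito equation scaled} against $e_l$, I would write
$$\langle \omega^N_t-\omega^N_s, e_l\rangle = J^1_{s,t}+J^2_{s,t}+J^3_{s,t}+J^4_{s,t},$$
where $J^1,J^2,J^3,J^4$ are respectively the viscous term $\nu\int_s^t\langle\omega^N_r,\Delta e_l\rangle\,dr$, the Smagorinsky term $\int_s^t\langle g(\omega^N_r),\Delta e_l\rangle\,dr$, the transport term $\int_s^t\langle K[\omega^N_r]\cdot\nabla e_l,\omega^N_r\rangle\,dr$ and the stochastic term $\sum_k\theta^N_k\int_s^t\langle\sigma_k\cdot\nabla e_l,f(\omega^N_r)\rangle\,dW^k_r$, exactly as in the proof of Lemma \ref{solution vs eigenfunctions}.

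For $J^1$ and $J^3$ I would argue as in \cite[Lemma 3.4]{flandoli2021scaling}; the only input is the energy estimate furnished by Proposition \ref{further estimates solution}, namely $\sup_t\|\omega^N_t\|^2+2\nu\int_0^T\|\nabla\omega^N_s\|^2\,ds\le 2\|\omega_0\|^2$, which differs from the Galerkin bound \eqref{a priori 1} only by the harmless factor $2$. Hence the same bounds $\mathbb{E}[(J^1_{s,t})^M]+\mathbb{E}[(J^3_{s,t})^M]\lesssim \|\omega_0\|^M|l|^{2M}|t-s|^M+\|\omega_0\|^{2M}|l|^M|t-s|^M$ follow with $N$-independent constants. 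The Smagorinsky contribution $J^2$ is treated precisely as $I^2_{s,t}$: Hölder's inequality together with the growth bound \eqref{bound g}, the Sobolev embedding $H^{(2\alpha-1)/(2\alpha+1)}\hookrightarrow L^{2\alpha+1}$ and interpolation reduce it to the same energy estimate and yield $\mathbb{E}[(J^2_{s,t})^M]\lesssim |l|^{2M}(1+\|\omega_0\|^{M(2\alpha+1)})|t-s|^{M(1\wedge(3/2-\alpha))}$, uniformly in $N$.

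The only place where the coefficients $\theta^N$ enter is the stochastic term $J^4$. Here the Burkholder--Davis--Gundy inequality gives $\mathbb{E}[(J^4_{s,t})^M]\lesssim\mathbb{E}[|\sum_k(\theta^N_k)^2\int_s^t\langle\sigma_k\cdot\nabla e_l,f(\omega^N_r)\rangle^2\,dr|^{M/2}]$, and it is precisely the normalization $\sum_k(\theta^N_k)^2=1$ from \eqref{eq:theta}, combined with $\|\sigma_k\|_{L^\infty}=\sqrt2$ for every $k$, that makes this quantity independent of $N$; the remaining estimate via \eqref{bound f'}, Sobolev embedding and interpolation is identical to the treatment of $I^4_{s,t}$ and produces $\mathbb{E}[(J^4_{s,t})^M]\lesssim|l|^M|t-s|^{M/2}(1+\|\omega_0\|^{M(\alpha+1)})$. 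Summing the four contributions and using $|t-s|\le T$ to absorb the larger time exponents of $J^1,J^2,J^3$ into $|t-s|^{M/2}$ gives the assertion.

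Finally I would stress that there is no genuinely new obstacle: the argument is a repetition of Lemma \ref{solution vs eigenfunctions}, and the single point to verify is that the sole source of $N$-dependence, the coefficients $\theta^N$, enters only through the normalized sum $\sum_k(\theta^N_k)^2=1$. The mildly delicate bookkeeping, as before, is checking for $\alpha\in(1/2,1]$ that the time exponents arising in $J^2$ and $J^4$ are at least $M/2$, the binding exponent $M/2$ coming from the martingale term $J^4$.
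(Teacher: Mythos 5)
Your proposal is correct and follows essentially the same route as the paper: the authors likewise split $\langle \omega^N_t-\omega^N_s,e_l\rangle$ into the same four terms and simply invoke the estimates of Lemma \ref{solution vs eigenfunctions}, the only point being that the a priori bound from Theorem \ref{thm well posed} replaces the Galerkin bound and that $\theta^N$ enters only through $\sum_k(\theta^N_k)^2=1$. Your explicit remarks on the $N$-uniformity of the constants and on absorbing the larger time exponents via $|t-s|\le T$ are exactly the (tacit) content of the paper's argument.
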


\begin{proof}
 From the weak formulation satisfied by $\omega^N$ it follows that
    \begin{align*}
         \langle \omega^N_t-\omega^N_s,e_l\rangle &= \nu \int_s^t \langle \omega^N_r,\Delta e_l\rangle \, dr +\int_s^t \langle  g(\omega^N_r), \Delta e_l\rangle\, dr\\
         &\quad +\int_s^t \langle K[\omega^N_r]\cdot\nabla e_l,\omega^N_r\rangle dr + \sum_{k}\theta^{N}_k \langle \sigma_k\cdot\nabla e_l,f(\omega^N_r) \rangle\, dW^k_r\\
         & =I^1_{s,t}+I^2_{s,t}+I^3_{s,t}+I^4_{s,t}.
    \end{align*}
    All the terms above can be treated analogously to Lemma \ref{solution vs eigenfunctions}, leading us to the following estimates:
    \begin{align*}
\mathbb{E}\left[(I^1_{s,t})^M\right]+\mathbb{E}\left[(I^3_{s,t})^M\right]&\lesssim \lVert \omega_0\rVert^{M}\lvert l\rvert^{2M} \lvert t-s\rvert^M+\lVert \omega_0\rVert^{2M}\lvert l\rvert^M \lvert t-s\rvert^M,\\
    \mathbb{E}\left[(I^2_{s,t})^M\right]
    & \lesssim \lvert l \rvert^{2M}(1+\lVert \omega_0\rVert^{M(2\alpha+1)})\lvert t-s\rvert^{M\left(1\wedge (\frac32- \alpha) \right)},\\
    \mathbb{E}\left[(I^4_{s,t})^M\right]
    & \lesssim \lvert l\rvert^M\lvert t-s\rvert^{M/2} \big(1+\lVert \omega_0\rVert^{M(\alpha+1)}\big).
\end{align*}
Combining them the thesis follows immediately.
\end{proof}

Thanks to the discussion before Lemma \ref{Lemma tightness galerkin} in order to obtain the required tightness in $L^2([0,T];H^{1-})\cap C([0,T];H^-)$ we need the following result.

\begin{lemma}\label{Lemma tightness scaling}
    If $\beta>3+\frac{2}{r_1}, \ s_1<\frac{1}{2}$, $ s_1r_1>1$ and $p>1$,  there exists a constant $C$ independent of $N$ such that
    \begin{align*}
        \mathbb{E}\left[\int_0^T\lVert \nabla\omega^N_s\rVert^2\, ds+ \int_0^T\lVert \omega^N_s\rVert^p\, ds + \int_0^T dt \int_0^T ds \frac{\lVert \omega^N_t-\omega^N_s\rVert_{H^{-\beta}}^{r_1}}{|t-s|^{1+r_1 s_1}}\right]\leq C.
    \end{align*}
\end{lemma}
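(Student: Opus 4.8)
The plan is to repeat verbatim the argument of Lemma \ref{Lemma tightness galerkin}, simply feeding in the $N$-uniform inputs now available for the scaled system: the energy inequality assumed for $\omega^N$ and the increment bound just established in Lemma \ref{solution scaled vs eigenfunctions}. The first two summands are immediate. Since $\mathbb{P}$-a.s. one has $\sup_{t}\lVert\omega^N_t\rVert^2+2\nu\int_0^T\lVert\nabla\omega^N_s\rVert^2\,ds\le 2\lVert\omega_0\rVert^2$, taking expectations bounds $\mathbb{E}[\int_0^T\lVert\nabla\omega^N_s\rVert^2\,ds]$ by $\nu^{-1}\lVert\omega_0\rVert^2$, while the pointwise bound $\lVert\omega^N_s\rVert\le\sqrt2\,\lVert\omega_0\rVert$ gives $\mathbb{E}[\int_0^T\lVert\omega^N_s\rVert^p\,ds]\le T\,2^{p/2}\lVert\omega_0\rVert^p$. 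Crucially, both estimates are independent of $N$, because the energy inequality rests only on the normalisation $\sum_k(\theta^N_k)^2=1$ from \eqref{eq:theta} together with the cancellation \eqref{eq:covariance-diagonal}, and not on the size of $\lVert\theta^N\rVert_{\ell^\infty}$.

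The only term requiring real work is the Gagliardo seminorm. First I would apply Fubini to exchange expectation and the double time integral, reducing matters to estimating $\mathbb{E}[\lVert\omega^N_t-\omega^N_s\rVert_{H^{-\beta}}^{r_1}]$. Expanding the $H^{-\beta}$ norm in the basis $\{e_l\}$ and inserting, for a small $\epsilon>0$, the splitting $\lvert l\rvert^{-2\beta}=\lvert l\rvert^{-2(\beta-(1+\epsilon)(r_1-2)/r_1)}\,\lvert l\rvert^{-2(1+\epsilon)(r_1-2)/r_1}$, one applies H\"older in $l$ with exponents $r_1/2$ and $r_1/(r_1-2)$. This yields the convergent prefactor $\big(\sum_l\lvert l\rvert^{-2(1+\epsilon)}\big)^{(r_1-2)/2}$ times $\sum_l\lvert l\rvert^{-(\beta r_1-(1+\epsilon)(r_1-2))}\,\mathbb{E}[\langle\omega^N_t-\omega^N_s,e_l\rangle^{r_1}]$. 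Here I invoke Lemma \ref{solution scaled vs eigenfunctions} with $M=r_1$, whose constant is independent of $N$, to bound each coefficient increment by $C(1+\lVert\omega_0\rVert^{r_1(2\vee(2\alpha+1))})\lvert l\rvert^{2r_1}\lvert t-s\rvert^{r_1/2}$. The residual sum $\sum_l\lvert l\rvert^{-r_1(\beta-3)}$ converges exactly as in Lemma \ref{Lemma tightness galerkin} because $\beta>3+2/r_1$, giving $\mathbb{E}[\lVert\omega^N_t-\omega^N_s\rVert_{H^{-\beta}}^{r_1}]\lesssim\lvert t-s\rvert^{r_1/2}$ uniformly in $N$.

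Substituting back, the double integral is controlled by $\int_0^T\!\!\int_0^T\lvert t-s\rvert^{r_1/2-1-r_1 s_1}\,dt\,ds$, whose integrand carries exponent $-1+r_1(1/2-s_1)>-1$ precisely because $s_1<1/2$; hence the integral is finite, and this is the sole place the hypothesis $s_1<1/2$ enters. Collecting the three bounds produces a constant $C$ depending only on $T$, $\nu$, $\lVert\omega_0\rVert$, $\alpha$ and $p$. I do not anticipate a genuine obstacle: the computation is structurally identical to that of Lemma \ref{Lemma tightness galerkin}, and the one point that must be verified — and the reason the bound is uniform in $N$ — is simply that every input constant is $N$-independent, which is guaranteed by the $N$-uniformity built into Lemma \ref{solution scaled vs eigenfunctions} and into the assumed energy inequality.
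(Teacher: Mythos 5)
Your proposal is correct and follows exactly the route the paper intends: the paper omits this proof, stating only that it is ``a computation based on the definition of the Sobolev norms and the estimate guaranteed by Lemma \ref{solution scaled vs eigenfunctions}'', which is precisely the verbatim transplant of the argument of Lemma \ref{Lemma tightness galerkin} that you carry out, including the correct identification of where the $N$-uniformity comes from (the a.s.\ energy bound from Theorem \ref{thm well posed} and the $N$-independent constant in Lemma \ref{solution scaled vs eigenfunctions}). No gaps beyond those already present in the Galerkin version.
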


We omit its proof since it is just a computation based on the definition of the Sobolev norms and the estimate guaranteed by Lemma \ref{solution scaled vs eigenfunctions}.
Combining the lemma above with Theorems \ref{compactness 1}, \ref{compactness 2} we have the following tightness result.

\begin{corollary}\label{corollary compactenss smag}
The family of laws of $\omega^N$ is tight on $C([0,T];H^-)\cap L^2(0,T;H^{1-}).$
\end{corollary}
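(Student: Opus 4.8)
The plan is to obtain tightness from the uniform-in-$N$ bounds of Lemma \ref{Lemma tightness scaling} by combining Markov's inequality with the compactness criteria of Theorems \ref{compactness 1} and \ref{compactness 2}, exactly as in the Galerkin case treated in Section \ref{sec well posed}. The target space $C([0,T];H^-)\cap L^2(0,T;H^{1-})$ is a projective limit: its topology is generated by the seminorms of $C([0,T];H^{-\delta})$ and $L^2(0,T;H^{1-\delta})$ with $\delta$ ranging over a sequence $\delta_j\downarrow 0$. Consequently a subset is relatively compact there if and only if it is relatively compact in each $C([0,T];H^{-\delta_j})$ and each $L^2(0,T;H^{1-\delta_j})$, and by Prokhorov's theorem it suffices to produce, for every $\varepsilon>0$, a single such relatively compact set carrying mass $\ge 1-\varepsilon$ under the law of every $\omega^N$.

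Fixing $\beta,s_1,r_1$ as in Lemma \ref{Lemma tightness scaling}, I would define for $R>0$ the set
\[
K_R=\Big\{\omega:\ \sup_{t\in[0,T]}\lVert\omega_t\rVert^2\le 2\lVert\omega_0\rVert^2,\ \int_0^T\lVert\nabla\omega_s\rVert^2\,ds\le \nu^{-1}\lVert\omega_0\rVert^2,\ \int_0^T\!\!\int_0^T\frac{\lVert\omega_t-\omega_s\rVert_{H^{-\beta}}^{r_1}}{|t-s|^{1+r_1 s_1}}\,dt\,ds\le R\Big\}.
\]
It is worth noting that the first two constraints hold pathwise and almost surely for every $\omega^N$, being exactly the energy inequality assumed in \eqref{Ito equation scaled}; only the last (Gagliardo) constraint, which up to a constant is the $r_1$-th power of the $W^{s_1,r_1}(0,T;H^{-\beta})$-seminorm, genuinely requires probabilistic control. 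The $L^\infty(0,T;L^2)$ bound furnishes an $L^p(0,T;L^2)$ bound for every $p$ with a deterministic constant, so membership in $K_R$ encodes boundedness in $L^2(0,T;H^1)\cap L^p(0,T;L^2)\cap W^{s_1,r_1}(0,T;H^{-\beta})$ for all $p$. Invoking the discussion preceding Lemma \ref{Lemma tightness galerkin}, Theorem \ref{compactness 1} gives relative compactness of $K_R$ in $L^2(0,T;H^{1-\delta})$ for every $\delta>0$, while Theorem \ref{compactness 2}, with $p$ chosen large enough (namely $p>\frac{r_1}{\delta(s_1r_1-1)(\beta-\delta)}$, which is admissible since $p$ is free), gives relative compactness in $C([0,T];H^{-\delta})$ for every $\delta>0$; a diagonal argument over $\delta_j\downarrow0$ then yields relative compactness of $K_R$ in the full intersection space.

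To close, Lemma \ref{Lemma tightness scaling} bounds the expectation of the Gagliardo functional by a constant $C$ independent of $N$, so Markov's inequality gives $\mathbb{P}(\omega^N\notin K_R)\le C/R$ uniformly in $N$; taking $R=C/\varepsilon$ makes this at most $\varepsilon$ for all $N$, which is precisely the sought tightness. I do not expect a genuine obstacle inside this corollary: the analytic difficulty, namely the time-increment estimate with the nonlinear drift $\Delta g(\omega)$ and the $f(\omega)$-noise that produces the $|t-s|^{M/2}$ scaling, has already been absorbed into Lemmas \ref{solution scaled vs eigenfunctions} and \ref{Lemma tightness scaling}. The only points demanding care are keeping every constant (in the energy inequality, in Lemma \ref{Lemma tightness scaling}, and in the compactness thresholds) independent of $N$, and handling the intersection topology through the diagonal procedure over $\delta_j$; both are routine once the uniform bounds are in hand, which is why the statement is a soft corollary of the preceding lemmas.
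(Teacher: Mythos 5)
Your proposal is correct and follows exactly the route the paper takes: the paper states the corollary as an immediate consequence of Lemma \ref{Lemma tightness scaling} combined with Theorems \ref{compactness 1} and \ref{compactness 2} via Markov's inequality, and your write-up simply makes explicit the standard details (the compact sets $K_R$, the split between the pathwise energy bounds and the Gagliardo seminorm controlled only in expectation, and the diagonal argument over $\delta_j\downarrow 0$ for the intersection topology).
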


\subsection{Passage to the limit}

The preliminary part in order to showing the convergence is analogous to Subsection \ref{passage to the limit Galerkin}. Arguing as in \cite{flandoli2021scaling}, by Skorohod's representation theorem, we can find, up to passing to subsequences, an auxiliary probability space, that for simplicity we continue to call $(\Omega,\mathcal{F},\mathbb{P})$, and processes $(\Tilde{\omega}^N, \tilde W^N:=\{\tilde W^{N,k}\}_{k\in \mathbb{Z}^2_0}),\ (\overline{\omega}, \tilde W:=\{\tilde W^{k}\}_{k\in \mathbb{Z}^2_0}),  $ such that \begin{align*}
    &\Tilde{\omega}^N\rightarrow \overline{\omega} \quad \text{in } C([0,T];H^-)\cap L^2(0,T;H^{1-}) \quad \mathbb{P} \mbox{-a.s.}\\
   & \tilde W^N\rightarrow \tilde W \quad \text{in } C([0,T];\mathbb{Z}^2_0)\quad \mathbb{P} \mbox{-a.s.}
\end{align*}
The convergence above from $\tilde W^N$ to $\tilde W$ can be seen as the uniform convergence of cylindrical Wiener processes $\Tilde{W}^N=\sum_{k\in \mathbb{Z}^2_0}e_k \tilde W^{N,k},\ \Tilde{W}=\sum_{k\in \mathbb{Z}^2_0}e_k \tilde W^{k}$ on a suitable Hilbert space $U_0$. Before going on, in order to identify $\overline{\omega}$ as a random variable supported on the weak solutions of equation \eqref{Ito equation} we need further integrability properties of $\overline{\omega}$. The proof of the proposition below is analogous to Proposition \ref{further estimates solution}, therefore we will omit the details.
\begin{proposition}\label{further estimates solution smagorinky}
The process $\overline{\omega}$ has weakly continuous trajectories on $L^2(\mathbb{T}^2)$ and satisfies
\begin{align*}
    \sup_{t\in [0,T]}\lVert \overline{\omega}_t\rVert^2\leq \lVert \omega_0\rVert^2 \quad\mathbb{P}\mbox{-a.s.}\\
    2\nu \int_0^T \lVert \nabla \overline{\omega}_s\rVert^2 ds \leq \lVert \omega_0\rVert^2 \quad\mathbb{P}\mbox{-a.s.}
\end{align*}
\end{proposition}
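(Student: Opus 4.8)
The plan is to transfer the uniform energy bounds enjoyed by the Skorohod copies $\tilde\omega^N$ to the limit $\overline\omega$ by weak lower semicontinuity, following verbatim the scheme of Proposition \ref{further estimates solution}. Since $\tilde\omega^N$ has the same law as $\omega^N$, and each $\omega^N$ is constructed exactly as the weak solution produced by Theorem \ref{thm well posed} (so that the sharp bounds of Proposition \ref{further estimates solution} hold for it), the copies satisfy $\mathbb P$-a.s. both $\sup_{t\in[0,T]}\|\tilde\omega^N_t\|^2\le\|\omega_0\|^2$ and $2\nu\int_0^T\|\nabla\tilde\omega^N_s\|^2\,ds\le\|\omega_0\|^2$. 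These are the two a priori inputs from which the thesis will be extracted.

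First I would prove the weak continuity of $t\mapsto\overline\omega_t$ in $L^2$. Because $\tilde\omega^N\to\overline\omega$ in $C([0,T];H^-)$ $\mathbb P$-a.s., for each fixed $\phi\in C^\infty(\mathbb T^2)$ the scalar functions $t\mapsto\langle\tilde\omega^N_t,\phi\rangle$ converge uniformly on $[0,T]$; as each is continuous, so is the limit $t\mapsto\langle\overline\omega_t,\phi\rangle$. Combining this with the uniform bound $\sup_t\|\overline\omega_t\|\le\|\omega_0\|$ (established below) and the density of $C^\infty(\mathbb T^2)$ in $L^2(\mathbb T^2)$, a routine three-term estimate upgrades continuity to every test function $\phi\in L^2(\mathbb T^2)$, which is exactly weak continuity in $L^2$.

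For the supremum bound I would fix $t\in[0,T]$ and note that $\{\tilde\omega^N_t\}_N$ is bounded in $L^2$, so every subsequence has a weakly $L^2$-convergent sub-subsequence; since simultaneously $\tilde\omega^N_t\to\overline\omega_t$ in $H^-$, uniqueness of limits in the sense of distributions forces every such weak limit to equal $\overline\omega_t$, whence $\tilde\omega^N_t\rightharpoonup\overline\omega_t$ weakly in $L^2$ along the full sequence. Weak lower semicontinuity of the norm then yields $\|\overline\omega_t\|^2\le\liminf_N\|\tilde\omega^N_t\|^2\le\|\omega_0\|^2$ for every $t$, and taking the supremum gives the first assertion. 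The gradient estimate is obtained in the same spirit: $\{\tilde\omega^N\}_N$ is bounded in $L^2(0,T;H^1)$ $\mathbb P$-a.s., hence weakly convergent therein along a subsequence, and the strong convergence $\tilde\omega^N\to\overline\omega$ in $L^2(0,T;H^{1-})$ identifies the weak limit as $\overline\omega$; lower semicontinuity of the $L^2(0,T;H^1)$ seminorm then gives $2\nu\int_0^T\|\nabla\overline\omega_s\|^2\,ds\le\liminf_N 2\nu\int_0^T\|\nabla\tilde\omega^N_s\|^2\,ds\le\|\omega_0\|^2$.

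The only genuinely delicate steps are the two identifications of weak limits with $\overline\omega$ — pointwise in time in $L^2$ and globally in $L^2(0,T;H^1)$ — starting only from strong convergence in the weaker spaces $H^-$ and $L^2(0,T;H^{1-})$. Both rest on the same principle: the uniform energy bounds provide weak compactness in the stronger space, and uniqueness of distributional limits pins the weak limit to the already-known strong limit, so that no mass is lost in passing to the finer topology. With these identifications in hand the remainder is a mechanical application of weak lower semicontinuity, structurally identical to Proposition \ref{further estimates solution} with the pair $(\overline\omega,\tilde\omega^N)$ replacing $(\omega,\tilde\omega^n)$.
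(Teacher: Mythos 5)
Your argument is correct and is precisely the standard one the paper intends (it omits the proof, deferring to Proposition \ref{further estimates solution} and ultimately to Lemma 3.5 of \cite{flandoli2021scaling}): transfer the pathwise energy bounds to the Skorohod copies by equality of laws, identify the weak $L^2$ and $L^2(0,T;H^1)$ limits with $\overline{\omega}$ via the strong convergence in the weaker topologies, and conclude by weak lower semicontinuity of the norms plus a density argument for weak continuity. You also correctly note that the sharp constant $\lVert\omega_0\rVert^2$ (rather than the $2\lVert\omega_0\rVert^2$ stated before \eqref{Ito equation scaled}) requires taking the solutions $\omega^N$ produced by the Galerkin construction, for which Proposition \ref{further estimates solution} gives the two separate bounds.
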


Before exploiting the convergence properties of $\omega^N$, we are interested in showing the uniqueness of weak solutions of \eqref{smagorinsky equation}. The approach we follow is the so called $H^{-1}$-method for active scalars, see for example Theorem 2 and Theorem 5 in \cite{azzam2015bounded} for other applications of this method.

\begin{lemma}\label{uniqueness smag}
    There exists at most one solution of \eqref{smagorinsky equation} in the sense of definition \ref{def well smag}.
\end{lemma}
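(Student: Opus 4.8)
The plan is to run the $H^{-1}$ energy method. Suppose $\omega^1,\omega^2$ are two weak solutions of \eqref{smagorinsky equation} in the sense of Definition \ref{def well smag} with the same datum $\omega_0$, set $w=\omega^1-\omega^2$ and introduce the zero-mean stream function $\psi=(-\Delta)^{-1}w$, so that $\lVert w\rVert_{H^{-1}}^2=\lVert\nabla\psi\rVert^2$, $\Delta\psi=-w$ and $K[w]=-\nabla^{\perp}\psi$. Formally, pairing the equation for $w$ with $\psi$ (equivalently, using $\tfrac12\tfrac{d}{dt}\lVert w\rVert_{H^{-1}}^2=\langle\partial_t w,\psi\rangle$) produces three contributions, which I would treat separately. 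The viscous term gives $\nu\langle w,\Delta\psi\rangle=-\nu\lVert w\rVert^2$, a good negative quantity.

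The decisive observation concerns the Smagorinsky term: pairing $\Delta\big(g(\omega^1)-g(\omega^2)\big)$ with $\psi$ yields $\langle g(\omega^1)-g(\omega^2),\Delta\psi\rangle=-\langle g(\omega^1)-g(\omega^2),w\rangle\le 0$, because $g'=\tfrac14(f')^2\ge 0$ makes $g$ nondecreasing, whence $\big(g(\omega^1)-g(\omega^2)\big)(\omega^1-\omega^2)\ge 0$ pointwise. Thus the quasilinear diffusion only helps and can simply be discarded. For the transport term I would expand $\langle\omega^1,K[\omega^1]\cdot\nabla\psi\rangle-\langle\omega^2,K[\omega^2]\cdot\nabla\psi\rangle$ around $w$ and $K[w]$; the two contributions containing $K[w]$ vanish identically since $K[w]\cdot\nabla\psi=-\nabla^{\perp}\psi\cdot\nabla\psi=0$ pointwise, and the single survivor is $\langle w,K[\omega^2]\cdot\nabla\psi\rangle$. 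Writing $w=-\Delta\psi$ and integrating by parts once, using $\operatorname{div}K[\omega^2]=0$, this reduces to
\[
\langle w,K[\omega^2]\cdot\nabla\psi\rangle=\int_{\mathbb{T}^2}(\nabla\psi)^{t}\,\nabla K[\omega^2]\,\nabla\psi\,dx,
\]
which I would bound by $\lVert\nabla K[\omega^2]\rVert\,\lVert\nabla\psi\rVert_{L^4}^2$. Using the Biot--Savart bound $\lVert\nabla K[\omega^2]\rVert\lesssim\lVert\omega^2\rVert$ and the 2D Ladyzhenskaya inequality $\lVert\nabla\psi\rVert_{L^4}^2\lesssim\lVert\nabla\psi\rVert\,\lVert\nabla^2\psi\rVert\lesssim\lVert w\rVert_{H^{-1}}\lVert w\rVert$, this term is controlled by $C\lVert\omega^2\rVert\,\lVert w\rVert_{H^{-1}}\lVert w\rVert$.

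Collecting the three estimates and absorbing the factor $\lVert w\rVert$ into the viscous term by Young's inequality, I expect
\[
\frac{d}{dt}\lVert w_t\rVert_{H^{-1}}^2\le \frac{C}{\nu}\,\lVert\omega^2_t\rVert^2\,\lVert w_t\rVert_{H^{-1}}^2 .
\]
The prefactor $\lVert\omega^2_t\rVert^2$ is bounded uniformly in $t$ (as $\omega^2\in C_w(0,T;L^2)$), hence integrable, so Gr\"onwall's lemma together with $w_0=\omega_0-\omega_0=0$ forces $\lVert w_t\rVert_{H^{-1}}=0$ for all $t$, i.e. $\omega^1=\omega^2$.

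The main obstacle is not this algebra but the rigorous justification of the $H^{-1}$ energy balance at the available regularity $w\in C_w(0,T;L^2)\cap L^2(0,T;H^1)$: in particular $\partial_t w$ need not belong to $L^2(0,T;H^{-1})$, because of the term $\Delta g(\omega^i)$ whose nonlinearity $\lvert g(\omega^i)\rvert\lesssim 1+\lvert\omega^i\rvert^{2\alpha+1}$ is only controlled in weak norms once the 2D bounds on $\omega^i$ are used. I would therefore perform the computation on the spectral truncations $\Pi^N w$ (equivalently, test with the regularized stream function $(-\Delta)^{-1}\Pi^N w$), where everything is a genuine identity, and then pass to the limit $N\to\infty$. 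The linear and transport terms converge by the estimates above, while for the monotone Smagorinsky term one only needs to retain its favourable sign in the limit, so that the resulting \emph{inequality} still suffices to run Gr\"onwall. Checking that the truncation commutators in the transport term vanish as $N\to\infty$, and that the quasilinear diffusion keeps the correct sign, is the technical heart of the argument.
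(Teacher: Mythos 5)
Your proposal is correct and follows essentially the same $H^{-1}$-method as the paper: the same monotonicity argument kills the Smagorinsky term, and the surviving transport contribution is integrated by parts and estimated exactly as in the paper's proof, leading to the same Gr\"onwall inequality with prefactor $\lVert \omega^2_t\rVert^2$. The one genuine (and welcome) simplification is your observation that $K[w]\cdot\nabla\psi=-\nabla^{\perp}\psi\cdot\nabla\psi=0$ pointwise, so the term $\int \tilde\omega\, K[w]\cdot\nabla\psi$ vanishes identically; the paper instead keeps this term and bounds it using the $L^4\to L^4$ boundedness of $K\operatorname{div}$, which your remark renders unnecessary. As for the rigorous justification you defer to a truncation argument, the paper takes a slightly different but equally standard route: it upgrades the weak formulation to time-dependent test functions in $L^2(0,T;H^2)\cap W^{1,2}(0,T;L^2)$, checks from the equation that $\psi=(-\Delta)^{-1}w$ has exactly this regularity, and then tests with $\psi$ directly, so no commutator analysis is needed.
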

\begin{proof}
   First note that, arguing for example as in \cite{flandoli2022heat, FlaLuoWaseda}, we can extend the weak formulation of \eqref{smagorinsky equation} in Definition \ref{def well smag} to time-dependent test functions in $ L^2(0,T;H^2)\cap W^{1,2}(0,T;L^2)$. Therefore,
   our weak formulation becomes: for any $\phi\in L^2(0,T;H^2)\cap W^{1,2}(0,T;L^2)$ and $ t\in [0,T]$, it holds
\begin{align*}
    \langle \overline{\omega}_t,\phi_t\rangle-\langle \omega_0,\phi_0\rangle &= \int_0^t\langle \overline{\omega}_s,\partial_s\phi_s\rangle\, ds + \nu \int_0^t \langle \overline{\omega}_s,\Delta \phi_s\rangle\, ds \\ &\quad + \int_0^t \langle  g(\overline{\omega}_s), \Delta\phi_s\rangle\, ds +\int_0^t \langle \overline{\omega}_s, K[\overline{\omega}_s]\cdot\nabla\phi_s \rangle\, ds.
\end{align*}

   Consider now two weak solutions $\omega, \tilde \omega\in C_w([0,T];L^2)\cap L^2(0,T;H^1)$.
    Looking at the equation and exploiting the regularity of the weak solutions, it follows that actually $\omega, \Tilde{\omega}\in C_{w}([0,T];L^2)\cap W^{1,2}(0,T;H^{-2})$. Let $w_t=\omega_t-\tilde \omega_t$, then
    \begin{align*}
        \psi=(-\Delta)^{-1}w\in C_w([0,T];H^{2})\cap L^2(0,T;H^3)\cap W^{1,2}(0,T;L^2)
    \end{align*}
   is a proper test function and we obtain
   \begin{align*}
       \frac12 \lVert w_t\rVert_{H^{-1}}^2 & =\frac12 \lVert \psi_t\rVert_{H^{1}}^2\\
       &=-\nu\int_0^t \lVert w_s\rVert^2 ds-  \int_0^t \int_{\mathbb{T}^2} \left(g(\omega_s)- g(\tilde\omega_s)\right) \left(\omega_s-\Tilde{\omega}_s\right)dxds \\
       &\quad +\int_0^t\int_{\mathbb{T}^2} w_sK[\omega_s]\cdot \nabla\psi_s \, dxds
       +\int_0^t\int_{\mathbb{T}^2}\tilde \omega_s K[w_s]\cdot \nabla\psi_s\, dx ds.
   \end{align*}
   Thanks to the fact that the function $g$ is monotone increasing, we have \begin{align*}
      \int_0^t \int_{\mathbb{T}^2} \left(g(\omega_s)-g(\tilde\omega_s)\right)\left(\omega_s-\Tilde{\omega}_s\right)dxds\geq 0.
   \end{align*}
   Next, by the definition of $\psi$ and integrating by parts,
     $$ \aligned
     \int_{\mathbb{T}^2} w_sK[\omega_s]\cdot \nabla\psi_s \, dx &= -\int_{\mathbb T^2} (\Delta \psi_s)\, K[\omega_s]\cdot \nabla\psi_s \, dx \\
     &= \int_{\mathbb T^2} (\nabla \psi_s)\cdot \nabla( K[\omega_s]\cdot \nabla\psi_s) \, dx \\
     &= \int_{\mathbb T^2} (\nabla \psi_s)\cdot \big((\nabla K[\omega_s])\cdot \nabla\psi_s\big) \, dx \\
     &\quad + \int_{\mathbb T^2} (\nabla \psi_s)\cdot \big(K[\omega_s]\cdot \nabla (\nabla\psi_s)\big) \, dx;
     \endaligned $$
   the last integral vanishes since $K[\omega_s]$ is divergence free. Therefore,
   we can proceed as in the proof of \cite[Theorem 5]{azzam2015bounded} and obtain
   \begin{align*}
       &\frac12 \lVert \psi_t\rVert_{H^{1}}^2+\nu\int_0^t \lVert w_s\rVert^2\, ds \\
       &\leq \int_0^t\int_{\mathbb{T}^2} \lvert \nabla K[\omega_s] \rvert\, \lvert\nabla\psi_s\rvert^2 \, dxds +\int_0^t \int_{\mathbb{T}^2} |\tilde \omega_s|\, |K[w_s]|\, |\nabla\psi_s|\, dx ds\\
       & \leq \int_0^t \big(\lVert \nabla K[\omega_s]\rVert\, \lVert \nabla\psi_s\rVert_{L^4}^2+ \lVert \Tilde{\omega}_s\rVert\, \lVert \nabla\psi_s\rVert_{L^4}\lVert K[\operatorname{div}\nabla \psi_s]\rVert_{L^4}\big) \,ds.
   \end{align*}
   We remark that $\nabla K:L^2 \to L^2$ and $K \operatorname{div}: L^4\to L^4$ are bounded operators, hence
   \begin{align*}
       \frac12 \lVert \psi_t\rVert_{H^{1}}^2+\nu\int_0^t \lVert w_s\rVert^2\, ds
       & \lesssim  \lVert \nabla K\rVert_{L^2\rightarrow L^2}\int_0^t \lVert \omega_s\rVert\, \lVert \nabla\psi_s\rVert_{L^4}^2\, ds \\
       &\quad + \lVert K\operatorname{div}\rVert_{L^4\rightarrow L^4}\int_0^t\lVert \Tilde{\omega}_s\rVert\, \lVert \nabla\psi_s\rVert_{L^4}^2\, ds\\
       &\lesssim \int_0^t \left( \lVert \omega_s\rVert+\lVert \Tilde{\omega}_s\rVert \right) \lVert \nabla \psi_s\rVert_{L^2}\lVert \nabla \psi_s\rVert_{H^1}\, ds
   \end{align*}
   by Sobolev embedding and interpolation. Therefore,
   \begin{align*}
       \frac12 \lVert \psi_t\rVert_{H^{1}}^2+\nu\int_0^t \lVert w_s\rVert^2\, ds
       & \lesssim \int_0^t \left( \lVert \omega_s\rVert+\lVert \Tilde{\omega}_s\rVert \right)\lVert \psi_s\rVert_{H^1}\lVert \psi_s\rVert_{H^2}\, ds\\
       & \leq C_\nu\! \int_0^t\!\! \left( \lVert \omega_s\rVert^2+\lVert \Tilde{\omega}_s\rVert^2 \right)\lVert \psi_s\rVert_{H^1}^2ds +\frac{\nu}{2}\!\int_0^t \lVert w_s\rVert^2\, ds.
   \end{align*}
   Since both $\omega$ and $\Tilde{\omega}$ belong to $C_w(0,T;L^2(\mathbb{T}^2))$, by Gr\"onwall's inequality the thesis follows.
\end{proof}

Now we are ready to provide the proof of our main theorem.

\begin{proof}[Proof of Theorem \ref{Scaling Limit Smagorinski}]
Let $\phi\in C^{\infty}(\mathbb{T}^2)$, by classical arguments for each $N\in \mathbb{N}$, $\Tilde{\omega}^N$ satisfies the following weak formulation: $\mathbb{P}$-a.s. for all $t\in [0,T]$,
\begin{align*}
    \langle \Tilde{\omega}^N_t-\omega_0,\phi\rangle
    &= \nu \int_0^t \langle \Tilde{\omega}^N_s,\Delta \phi\rangle\, ds + \int_0^t \langle g( \Tilde{\omega}^N_s), \Delta\phi\rangle\, ds\\
    & +\int_0^t \langle K[\Tilde{\omega}^N_s]\cdot\nabla\phi,\Tilde{\omega}^N_s\rangle\, ds + \sum_{k} \int_0^t\theta^{N}_k \langle \sigma_k\cdot\nabla \phi,f(\Tilde{\omega}^N_s) \rangle\, d\tilde W^{N,k}_s .
\end{align*}
    Up to passing to a further subsequence, we will show the $\mathbb{P}$-a.s. convergence, uniformly in time, of all the terms appearing above, except the martingale part; this is the only term that will present some differences with respect to the proof of Theorem \ref{thm well posed}. Therefore, we omit the treatments of the other terms which are similar to the proof of Theorem \ref{thm well posed}, and concentrate on the martingale part which will be shown to vanish in the limit, uniformly in time.

    In order to deal with the stochastic integral, applying Burkholder-Davis-Gundy inequality and using the fact that $\{\sigma_k\}_k$ is an orthonormal family of vector fields, we obtain
    \begin{align}
        J&:= \mathbb{E}\left[\sup_{t\in [0,T]}\left\lvert \sum_{k}\int_0^t\theta^{N}_k \langle \sigma_k\cdot\nabla \phi,f(\Tilde{\omega}^N_s) \rangle\, dW^{N,k}_s \right\rvert^2 \right] \notag\\
        & \lesssim \mathbb{E}\left[\sum_{k} \int_0^T (\theta^{N}_k)^2 \langle \sigma_k\cdot\nabla \phi,f(\Tilde{\omega}^N_s) \rangle^2\, ds\right]\notag\\
        & \leq \lVert \theta^N\rVert_{\ell^{\infty}}^2\, \mathbb{E} \bigg[\int_0^T\lVert f(\Tilde{\omega}^N_s) \nabla\phi \rVert^2 \, ds\bigg]\notag .
    \end{align}
Then by relation \eqref{bound f'} and Sobolev embedding theorem,
    \begin{align}
        J & \lesssim \lVert \theta^N\rVert_{\ell^{\infty}}^2\lVert \phi\rVert_{W^{1,\infty}}^2 \mathbb{E} \bigg[ \int_0^T \big\lVert 1+\lvert  \Tilde{\omega}^N_s\rvert^{\alpha+1} \big\rVert^2\, ds\bigg]\notag\\
        & \lesssim \lVert \theta^N\rVert_{\ell^{\infty}}^2\lVert \phi\rVert_{W^{1,\infty}}^2 \bigg(1+ \mathbb{E} \bigg[ \int_0^T\lVert \Tilde{\omega}^N_s\rVert_{L^{2(\alpha+1)}}^{2(\alpha+1)}\, ds \bigg]\bigg) \notag\\
        & \lesssim \lVert \theta^N\rVert_{\ell^{\infty}}^2\lVert \phi\rVert_{W^{1,\infty}}^2 \bigg(1+ \mathbb{E} \bigg[ \int_0^T\lVert \Tilde{\omega}^N_s\rVert_{H^{\frac{\alpha}{\alpha+1}}}^{2(\alpha+1)}\, ds\bigg] \bigg) \notag ,
    \end{align}
and, using interpolation inequalities and \eqref{eq:theta-N-1} yields
    \begin{align}
        J & \leq \lVert \theta^N\rVert_{\ell^{\infty}}^2\lVert \phi\rVert_{W^{1,\infty}}^2 \bigg( 1+ \mathbb{E} \bigg[ \int_0^T\lVert   \Tilde{\omega}^N_s\rVert_{H^1}^{2\alpha}\lVert \tilde\omega^N_s \rVert^2\, ds\bigg] \bigg)\notag\\
        & \lesssim \lVert \theta^N\rVert_{\ell^{\infty}}^2\lVert \phi\rVert_{W^{1,\infty}}^2 \big(1+\lVert \omega_0\rVert^{2(\alpha+1)} \big)\rightarrow 0. \notag
    \end{align}

    Summarizing the above arguments we arrive at
    \begin{align}
    \langle \overline{\omega}_t,\phi\rangle-\langle \overline{\omega}_0,\phi\rangle
    &= \nu \int_0^t \langle \overline{\omega}_s,\Delta \phi\rangle\, ds+\int_0^t \langle g( \overline{\omega}_s), \Delta\phi\rangle\, ds \notag\\
    & +\int_0^t \langle K[\overline{\omega}_s]\cdot\nabla\phi,\overline{\omega}_s\rangle\, ds\quad\mathbb{P} \mbox{-a.s. } \forall\, t\in [0,T] \label{limit relation}.
    \end{align}
    By standard density argument we can find a zero measure set $N$ such that on its complementary relation \eqref{limit relation} holds for each $\phi\in C^{\infty}$.
    By Corollary \ref{corollary compactenss smag} and Lemma \ref{uniqueness smag}, every subsequence $\mathcal{L}(\omega^{N_k})$ admits a sub-subsequence which converges to the unique limit point $\delta_{\overline{\omega}}$, where $\overline{\omega}$ is the unique deterministic solution of \eqref{smagorinsky equation}. Then, for example by \cite[Theorem 2.6]{billingsley2013convergence}, the whole sequence $\mathcal{L}(\omega^{N})$ converges weakly to $\delta_{\overline{\omega}}$.
\end{proof}

As a Corollary of Lemma \ref{uniqueness smag} and Theorem \ref{Scaling Limit Smagorinski} we have the following result.

\begin{corollary}\label{well-posed smag}
    There exists a unique solution of \eqref{smagorinsky equation} in the sense of Definition \ref{def well smag}.
\end{corollary}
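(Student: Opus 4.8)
The plan is to read off the corollary directly from the two main results already established. Since the statement asserts both existence and uniqueness, and uniqueness is exactly the content of Lemma \ref{uniqueness smag}, the only thing left to produce is a single weak solution in the sense of Definition \ref{def well smag}. I claim this solution has in fact already been constructed inside the proof of Theorem \ref{Scaling Limit Smagorinski}, so that no new estimate is required and the corollary is genuinely immediate.

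First I would recall that the Skorokhod construction carried out in the proof of Theorem \ref{Scaling Limit Smagorinski} yields a process $\overline{\omega}$ which, by Proposition \ref{further estimates solution smagorinky}, has trajectories in $C_w([0,T];L^2(\mathbb{T}^2))\cap L^2(0,T;H^1(\mathbb{T}^2))$ and satisfies the a priori bounds $\sup_{t}\lVert\overline{\omega}_t\rVert^2\le\lVert\omega_0\rVert^2$ and $2\nu\int_0^T\lVert\nabla\overline{\omega}_s\rVert^2\,ds\le\lVert\omega_0\rVert^2$ $\mathbb{P}$-a.s. Next I would invoke relation \eqref{limit relation}, which was obtained there by passing to the limit in the weak formulation for $\Tilde{\omega}^N$ and showing that the martingale term vanishes as $N\to\infty$: it states precisely that, $\mathbb{P}$-a.s. and for every $\phi\in C^\infty(\mathbb{T}^2)$ and $t\in[0,T]$, the process $\overline{\omega}$ satisfies the integral identity appearing in Definition \ref{def well smag}. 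Hence, for each sample point $\varpi$ in the full-measure set on which \eqref{limit relation} holds, the trajectory $\overline{\omega}(\varpi,\cdot)$ is a deterministic weak solution of \eqref{smagorinsky equation} with the required regularity, which settles existence.

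Uniqueness is then dispatched by quoting Lemma \ref{uniqueness smag} verbatim. Combining the two halves gives the corollary, and as a byproduct one sees that the limiting law in Theorem \ref{Scaling Limit Smagorinski} must be the Dirac mass $\delta_{\overline{\omega}}$ concentrated on this unique solution, consistently with what was proved there.

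The step requiring the most care, though it is not really an obstacle, is the passage from the probabilistically constructed field to a genuinely deterministic solution: one cannot simply assert that the limit is deterministic. Either one selects a single realization as above, or one observes that by Lemma \ref{uniqueness smag} any two realizations of $\overline{\omega}$ must coincide $dt\otimes dx$-a.e., so that $\overline{\omega}$ is $\mathbb{P}$-a.s. equal to a fixed element of $C_w([0,T];L^2(\mathbb{T}^2))\cap L^2(0,T;H^1(\mathbb{T}^2))$. Both routes avoid any new analytic input, so the corollary is indeed an immediate consequence of Lemma \ref{uniqueness smag} and Theorem \ref{Scaling Limit Smagorinski}.
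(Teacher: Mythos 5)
Your proposal is correct and follows exactly the route the paper intends: existence is read off from the limit object $\overline{\omega}$ of Theorem \ref{Scaling Limit Smagorinski} (via Proposition \ref{further estimates solution smagorinky} and relation \eqref{limit relation}), and uniqueness is Lemma \ref{uniqueness smag}. Your extra remark on selecting a realization (or using uniqueness to conclude $\overline{\omega}$ is deterministic) is a sensible clarification of a point the paper leaves implicit.
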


\section*{Acknowledgements}

The research of the first author is funded by the European Union (ERC, NoisyFluid, No. 101053472). The second author is grateful to the National Key R\&D Program of China (No. 2020YFA0712700), the National Natural Science Foundation of China (Nos. 11931004, 12090014) and the Youth Innovation Promotion Association, CAS (Y2021002).\\
Views and opinions expressed are however those of the authors only and do not necessarily reflect those of the European Union or the European Research Council. Neither the European Union nor the granting authority can be held responsible for them.

\bibliography{demo}{}

\begin{thebibliography}{10}

\bibitem{azzam2015bounded}
Jonas Azzam and Jacob Bedrossian.
\newblock Bounded mean oscillation and the uniqueness of active scalar
  equations.
\newblock {\em Transactions of the American Mathematical Society},
  367(5):3095--3118, 2015.

\bibitem{MarioMarco}
Marco Bagnara, Mario Maurelli, and Fanhui Xu.
\newblock No blow-up by nonlinear {I}t\^o noise for {E}uler equations.
\newblock in preparation.

\bibitem{BerselliLES}
C.~Berselli, Luigi, Traian Iliescu, and J.~Layton, William.
\newblock {\em Mathematics of Large Eddy Simulation of Turbulent Flows}.
\newblock Springer, 2006.

\bibitem{billingsley2013convergence}
Patrick Billingsley.
\newblock {\em Convergence of probability measures}.
\newblock John Wiley \& Sons, 2013.

\bibitem{boussinesq1877essai}
J~Boussinesq.
\newblock Essai sur la th{\'e}orie des eaux courantes. m{\'e}moires
  pr{\'e}sent{\'e}s par divers savants a l'academie des sciences de l'institut
  national de france.
\newblock {\em XXIII (1)}, 1877.

\bibitem{carigi2023dissipation}
Giulia Carigi and Eliseo Luongo.
\newblock Dissipation properties of transport noise in the two-layer
  quasi-geostrophic model.
\newblock {\em Journal of Mathematical Fluid Mechanics}, 25(2):28, 2023.

\bibitem{cheskidov2005leray}
Alexey Cheskidov, Darryl~D Holm, Eric Olson, and Edriss~S Titi.
\newblock On a leray--$\alpha$ model of turbulence.
\newblock {\em Proceedings of the Royal Society A: Mathematical, Physical and
  Engineering Sciences}, 461(2055):629--649, 2005.

\bibitem{cotter2017stochastic}
Colin~J Cotter, Georg~A Gottwald, and Darryl~D Holm.
\newblock Stochastic partial differential fluid equations as a diffusive limit
  of deterministic lagrangian multi-time dynamics.
\newblock {\em Proceedings of the Royal Society A: Mathematical, Physical and
  Engineering Sciences}, 473(2205):20170388, 2017.

\bibitem{cottet2003vorticity}
Georges-Henri Cottet, Delia Jiroveanu, and Bertrand Michaux.
\newblock Vorticity dynamics and turbulence models for large-eddy simulations.
\newblock {\em ESAIM: Mathematical Modelling and Numerical Analysis},
  37(1):187--207, 2003.

\bibitem{debussche2022second}
Arnaud Debussche and Umberto Pappalettera.
\newblock Second order perturbation theory of two-scale systems in fluid
  dynamics.
\newblock {\em arXiv preprint arXiv:2206.07775}, 2022.

\bibitem{Deng20}
X.~B. Deng and S.~Dong.
\newblock Sub-grid modelling based on vorticity-stretching.
\newblock {\em Journal of Physics}, Conference Series, 2020.

\bibitem{FlandoliOberwolfach}
Franco Flandoli.
\newblock {\em Stochastic fluid mechanics and Boussinesq hypothesis}.
\newblock notes Oberfolfach lectures 2022, in preparation.

\bibitem{flandoli2021scaling}
Franco Flandoli, Lucio Galeati, and Dejun Luo.
\newblock Scaling limit of stochastic 2d euler equations with transport noises
  to the deterministic navier--stokes equations.
\newblock {\em Journal of Evolution Equations}, 21(1):567--600, 2021.

\bibitem{flandoli2022eddy}
Franco Flandoli, Lucio Galeati, and Dejun Luo.
\newblock Eddy heat exchange at the boundary under white noise turbulence.
\newblock {\em Philosophical Transactions of the Royal Society A},
  380(2219):20210096, 2022.

\bibitem{flaluo2020enstrophy}
Franco Flandoli and Dejun Luo.
\newblock Convergence of transport noise to ornstein-uhlenbeck for 2d euler
  equations under the enstrophy measure.
\newblock {\em Ann. Probab.}, 48(1):264--295, 2020.

\bibitem{Fla23}
Franco Flandoli and Dejun Luo.
\newblock On the boussinesq hypothesis for a stochastic proudman-taylor model.
\newblock in preparation.

\bibitem{flandoli2022heat}
Franco Flandoli and Eliseo Luongo.
\newblock Heat diffusion in a channel under white noise modeling of turbulence.
\newblock {\em Mathematics in Engineering}, 4(4):1--21, 2022.

\bibitem{FlaLuoWaseda}
Franco Flandoli and Eliseo Luongo.
\newblock {\em Stochastic Partial Differential Equations in Fluid Mechanics},
  volume 2328.
\newblock Springer Nature, 2023.

\bibitem{flapapp2022fromadditive}
Franco Flandoli and Umberto Pappalettera.
\newblock From additive to transport noise in 2d fluid dynamics.
\newblock {\em Stoch PDE: Anal Comp}, 10(3):964--1004, 2022.

\bibitem{foias2001navier}
Ciprian Foias, Darryl~D Holm, and Edriss~S Titi.
\newblock The navier--stokes-alpha model of fluid turbulence.
\newblock {\em Physica D: Nonlinear Phenomena}, 152:505--519, 2001.

\bibitem{foias2002three}
Ciprian Foias, Darryl~D Holm, and Edriss~S Titi.
\newblock The three dimensional viscous camassa--holm equations, and their
  relation to the navier--stokes equations and turbulence theory.
\newblock {\em Journal of Dynamics and Differential Equations}, 14:1--35, 2002.

\bibitem{galeati2020convergence}
Lucio Galeati.
\newblock On the convergence of stochastic transport equations to a
  deterministic parabolic one.
\newblock {\em Stochastics and Partial Differential Equations: Analysis and
  Computations}, 8(4):833--868, 2020.

\bibitem{jiang2020foundations}
Nan Jiang, William Layton, Michael McLaughlin, Yao Rong, and Haiyun Zhao.
\newblock On the foundations of eddy viscosity models of turbulence.
\newblock {\em Fluids}, 5(4):167, 2020.

\bibitem{karatzas1991brownian}
Ioannis Karatzas and Steven~E Shreve.
\newblock {\em Brownian motion and stochastic calculus}, volume 113.
\newblock Springer Science \& Business Media, 1991.

\bibitem{Man78}
N.~N. Mansour, J.~H. Ferziger, and W.~C. Reynolds.
\newblock Large-eddy simulation of a turbulent mixing layer.
\newblock {\em Report TF-11}, Thermosciences Division Department of Mechanical
  Engineering, Stanford University, 1978.

\bibitem{marchioro2012mathematical}
Carlo Marchioro and Mario Pulvirenti.
\newblock {\em Mathematical theory of incompressible nonviscous fluids},
  volume~96.
\newblock Springer Science \& Business Media, 2012.

\bibitem{simon1986compact}
Jacques Simon.
\newblock Compact sets in the space ${L}^p({0, T; B})$.
\newblock {\em Annali di Matematica pura ed applicata}, 146(1):65--96, 1986.

\bibitem{skorokhod1982studies}
Anatoli{\u\i}~Vladimirovich Skorokhod.
\newblock {\em Studies in the theory of random processes}, volume 7021.
\newblock Courier Dover Publications, 1982.

\bibitem{Win96}
G.~S. Winckelmans, T.~S. Lund, D.~Carati, and A.~A. Wray.
\newblock A priori testing of les models for the velocity-pressure and
  vorticity-velocity formulations.
\newblock {\em Division of Fluid Dynamics Meeting, American Physical Society},
  November 24-26, 1996.

\bibitem{wirth1995eddy}
Achim Wirth, S~Gama, and U~Frisch.
\newblock Eddy viscosity of three-dimensional flow.
\newblock {\em Journal of Fluid Mechanics}, 288:249--264, 1995.

\end{thebibliography}
\bibliographystyle{plain}

\end{document}